\title{Implementation of high-order,\\ 
discontinuous Galerkin time stepping\\
for fractional diffusion problems}
\author{William McLean}
\address{School of Mathematics and Statistics,
University of New South Wales, Sydney, NSW~2052, \textsc{Australia}.}
\date{\today}
\newcommand{\jump}[1]{\llbracket#1\rrbracket}
\newcommand{\iprod}[1]{\langle#1\rangle}
\newcommand{\bigiprod}[1]{\bigl\langle#1\bigr\rangle}
\begin{document}
\maketitle
\begin{abstract}
The discontinuous Galerkin (\textsc{dG}) method provides a robust and flexible
technique for the time integration of fractional diffusion problems.  
However, a practical implementation uses coefficients defined by integrals
that are not easily evaluated.  We describe specialised 
quadrature techniques that efficiently maintain the overall accuracy of the 
\textsc{dG} method.  In addition, we observe in numerical experiments that
known superconvergence properties of \textsc{dG} time stepping for classical
diffusion problems carry over in a modified form to the fractional-order
setting.
\end{abstract}
\tableofcontents
\section{Introduction}\label{sec: intro}
The discontinuous Galerkin (\textsc{dG}) method provides an effective
numerical procedure for the time integration of diffusion problems.  In the
mid-1980s, Eriksson, Johnson and Thom\'ee~\cite{ErikssonEtAl1985} provided
the first detailed error analysis, which has been subsequently extended and
refined by numerous 
authors~\cite[][and references therein]{MakridakisNochetto2006,%
SchmutzWihler2019,SchotzauSchwab2001}.  The \textsc{dG} method has also
proved effective for time stepping of \emph{fractional} diffusion 
problems~\cite{McLeanMustapha2009,Mustapha2015} of the form
\begin{equation}\label{eq: fpde}
\partial_tu+\partial_t^{1-\alpha}Au=f(t)
	\quad\text{for $0<t\le T$, with $u(0)=u_0$.}
\end{equation}
Here, $A$ is a linear, second-order, elliptic partial differential operator
over a spatial domain~$\Omega$, subject to a homogeneous Dirichlet boundary
condition~$u=0$ on~$\partial\Omega$.  (Our notation suppresses the dependence 
of $u$~and $f$ on the spatial variables.) The fractional diffusion exponent is 
assumed to satisfy~$0<\alpha<1$ (the \emph{sub}-diffusive case), and the 
fractional time derivative is understood in the Riemann--Liouville sense: 
for $t>0$~and $\mu>0$,
\[
\partial_t^\mu v=\frac{\partial}{\partial t}\int_0^t\omega_\mu(t-s)v(s)\,ds
\quad\text{where}\quad \omega_\mu(t)=\frac{t^{\mu-1}}{\Gamma(\mu)}.
\]

The partial integro-differential equation~\eqref{eq: fpde} arises in a 
variety of physical models~\cite{KlafterSokolov2011,MetzlerKlafter2000} of 
diffusing particles whose behaviour is described by a continuous-time random 
walk for which the waiting-time distribution is a power law that decays 
like~$1/t^{1+\alpha}$.  The expected waiting time is therefore infinite, and 
the mean-square displacement is proportional to~$t^\alpha$.
Standard Brownian motion is recovered in the limit as~$\alpha\to1$,
when \eqref{eq: fpde} reduces to the classical diffusion equation.

Our main concern in the present work is with the practical implementation of
\textsc{dG} time stepping for~\eqref{eq: fpde}, and in particular with the
accurate evaluation of certain coefficients~$H^{n,n-\ell}_{ij}$ used
during the $n$th step.  \cref{sec: ODE} introduces the \textsc{dG}
method for the fractional ODE case of~\eqref{eq: fpde}, in which the 
operator~$A$ is replaced by a scalar~$\lambda>0$.  We will see in the 
simplest, lowest-order scheme, when the \textsc{dG} solution is 
piecewise-constant in time, that
\[
H^{n,0}_{11}=\int_{t_{n-1}}^{t_n}\frac{d}{dt}\biggl(
	\int_{t_{n-1}}^t\omega_\alpha(t-s)\,ds\biggr)\,dt
\]
and
\[
H^{n,n-\ell}_{11}=\int_{t_{n-1}}^{t_n}\frac{d}{dt}\biggl(
	\int_{t_{\ell-1}}^{t_\ell}\omega_\alpha(t-s)\,ds\biggr)\,dt
\quad\text{for $1\le\ell\le n-1$,}
\]
where $0=t_0<t_1<t_2<\cdots$ are the discrete time levels.
We easily verify that $H^{n,0}_{11}=\omega_{\alpha+1}(k_n)
=k_n^\alpha/\Gamma(\alpha+1)$, for a step-size~$k_n=t_n-t_{n-1}$, and
\begin{equation}\label{eq: H lowest order}
\begin{aligned}
H^{n,n-\ell}_{11}&=\omega_{\alpha+1}(t_n-t_{\ell-1})
	-\omega_{\alpha+1}(t_n-t_\ell)\\
	&\qquad{}-\omega_{\alpha+1}(t_{n-1}-t_{\ell-1})
	+\omega_{\alpha+1}(t_{n-1}-t_\ell),
\end{aligned}
\end{equation}
but for higher-order schemes the coefficients become progressively
more complicated. Although the~$H^{n,n-\ell}_{ij}$ can always be 
evaluated via repeated integration by parts, the resulting expressions are 
likely to suffer from roundoff when evaluated in floating-point arithmetic
if $n-\ell$ is large.  Consider just the lowest order 
case~\eqref{eq: H lowest order} with uniform time steps~$t_n=nk$, so that
\[
H^{n,n-\ell}_{11}=k^\alpha\bigl[\omega_{\alpha+1}(n-\ell+1)
	-2\omega_{\alpha+1}(n-\ell)+\omega_{\alpha+1}(n-\ell-1)\bigr].
\]
Since the factor in square brackets is a second-difference 
of~$\omega_{\alpha+1}$, its magnitude decays like $(n-\ell)^{\alpha-2}$ 
as $n-\ell$ increases, but the individual terms grow like $(n-\ell)^\alpha$.

We are therefore led to evaluate the coefficients~$H^{n,n-\ell}_{ij}$ via
quadratures with positive weights.  No special techniques are needed for 
$\ell\le n-2$, but when $\ell=n$~or $n-1$ we must deal with weakly singular 
integrands. In \cref{sec: evaluation}, we show how certain substitutions reduce 
the problem to dealing with integrands that are either smooth, or are products 
of smooth functions and standard Jacobi weight functions.  Similar 
substitutions, known as Duffy transformations~\cite{Duffy1982}, have long been 
used to compute singular integrals arising in the boundary element method.

\cref{sec: spatial} introduces a spatial discretisation for the fractional
PDE~\eqref{eq: fpde} and describes the structure of the linear system
that must be solved at each time step.  In \cref{sec: Legendre}, we
specialise the expressions for the coefficients by choosing Legendre
polynomials as the shape functions employed in the \textsc{dG} time stepping.

\cref{sec: reconstruction} describes a post-processing technique that,
when applied to the \textsc{dG} solution~$U$, produces a more accurate
approximate solution~$\widehat U$, known as the 
reconstruction~\cite{MakridakisNochetto2006} of~$U$.  If
$U$ is a piecewise polynomial of degree at most~$r-1$, then 
$\widehat U$ is a piecewise polynomial of degree at most~$r$.  For a 
classical diffusion problem, both $U$~and $\widehat U$ are known to be 
quasi-optimal, that is, accurate of order $k^r$~and $k^{r+1}$, respectively.
Thus, it is natural to ask what happens in the fractional-order
case, and we investigate this question in numerical experiments 
reported in \cref{sec: numerical}.
\section{A fractional \textsc{ODE}}\label{sec: ODE}
Our central concern is present already in the zero-dimensional case when we 
replace the elliptic operator~$A$ with a scalar~$\lambda\ge0$, so that the 
solution~$u(t)$ is a real-valued function satisfying the fractional \textsc{ODE}
\begin{equation}\label{eq: scalar ivp}
u'+\lambda\partial_t^{1-\alpha}u=f(t)
	\quad\text{for $0<t\le T$, with $u(0)=u_0$.}
\end{equation}
For the time discretisation, we introduce a grid
\[
0=t_0<t_1<t_2<\cdots<t_N=T,
\]
and form the vector~$\boldsymbol{t}=(t_0,t_1,\ldots,t_N)$.
Let $k_n=t_n-t_{n-1}$ denote the length of the $n$th (open) 
subinterval~$I_n=(t_{n-1},t_n)$.  We form the disjoint union
\[
I=I_1\cup I_2\cup\cdots\cup I_N,
\]
and for any function~$v:I\to\mathbb{R}$ write
\[
v^n_+=\lim_{\epsilon\downarrow 0}v(t+\epsilon),\qquad
v^n_-=\lim_{\epsilon\downarrow 0}v(t-\epsilon),\qquad
\jump{v}^n=v^n_+-v^n_-,
\]
provided the one-sided limits exist.

Given a vector $\boldsymbol{r}=(r_1,r_2,\ldots,r_N)$ of integers~$r_n\ge0$,
the trial space~$\mathcal{X}=\mathcal{X}(\boldsymbol{t},\boldsymbol{r})$ 
consists of the functions $X:I\to\mathbb{R}$ such that 
$X|_{I_n}\in\mathbb{P}_{r_n-1}$ for~$1\le n\le N$.  Here, $\mathbb{P}_m$ 
denotes the space of polynomials of degree at most~$m\ge0$, with real 
coefficients.  The \textsc{dG} solution~$U\in\mathcal{X}$ 
of~\eqref{eq: scalar ivp} is then defined by 
\cite{McLeanMustapha2009,Mustapha2015}
\begin{equation}\label{eq: scalar dG}
\jump{U}^{n-1}X^{n-1}_++\int_{I_n}(U'+\lambda\partial_t^{1-\alpha}U)X\,dt
	=\int_{I_n}fX\,dt
\end{equation}
for $X\in\mathbb{P}_{r_n-1}$ and $1\le n\le N$, where, in the case~$n=1$, 
we set $U^0_-=u_0$ so that $\jump{U}^0=U^0_+-U^0_-=U^0_+-u_0$.  
(The monograph of Thom\'ee~\cite[Chapter~12]{Thomee2006} is a standard 
reference providing a general introduction to \textsc{dG} time stepping for 
classical diffusion problems.)

To compute~$U$, we choose for each~$n$ a basis $\psi_{n1}$, $\psi_{n2}$,
\dots, $\psi_{nr_n}$ for~$\mathbb{P}_{r_n-1}$ and write
\begin{equation}\label{eq: U psi}
U(t)=\sum_{j=1}^{r_n}U^{nj}\psi_{nj}(t)\quad\text{for $t\in I_n$.}
\end{equation}
When $X=\psi_{ni}$, we find that
\[
U^{n-1}_+X^{n-1}_++\int_{I_n}U'X\,dt=\sum_{j=1}^{r_n}G^n_{ij}U^{nj}
\quad\text{and}\quad
U^{n-1}_-X^{n-1}_+=\sum_{j=1}^{r_{n-1}}K^{n,n-1}_{ij}U^{n-1,j},
\]
with coefficients given by
\[
G^n_{ij}=\psi_{nj}(t_{n-1})\psi_{ni}(t_{n-1})
	+\int_{I_n}\psi_{nj}'\psi_{ni}\,dt
\]
and
\[
K^{n,n-1}_{ij}=\psi_{n-1,j}(t_{n-1})\psi_{ni}(t_{n-1}).
\]
Owing to the convolutional structure of the fractional derivative, it is 
convenient to introduce the notation
\[
\bar\ell=n-\ell
\]
and define, if $t\in I_n$,
\[
\rho^{n\bar\ell}_j(t)=\rho^{n,n-\ell}_j(t)
	=\int_{I_\ell}\omega_\alpha(t-s)\psi_{\ell j}(s)\,ds
\quad\text{for $1\le\ell\le n-1$,}
\]
with
\[
\rho^{n\bar n}_j(t)=\rho^{n0}_j(t)
	=\int_{t_{n-1}}^t\omega_\alpha(t-s)\psi_{nj}(s)\,ds.
\]
We find that
\[
\partial_t^{1-\alpha}U=\sum_{\ell=1}^n\sum_{j=1}^{r_\ell} 
    U^{\ell j}(\rho^{n\bar\ell}_j)'(t) \quad\text{for $t\in I_n$,}
\]
and thus
\[
\int_{I_n}(\partial_t^{1-\alpha}U)X\,dt
	=\sum_{\ell=1}^n\sum_{j=1}^{r_\ell}H^{n\bar\ell}_{ij}U^{\ell j}
\quad\text{where}\quad H^{n\bar\ell}_{ij}=H^{n,n-\ell}_{ij}
	=\int_{I_n}(\rho^{n\ell}_j)'\psi_{ni}\,dt.
\]
Hence, putting
\[
F^{ni}=\int_{I_n}f\psi_{ni}\,dt,
\]
the \textsc{dG} method~\eqref{eq: scalar dG} requires 
\begin{multline}\label{eq: basic linear system}
\sum_{j=1}^{r_n}\bigl(G^n_{ij}+\lambda H^{n0}_{ij}\bigr)U^{nj}=F^{ni}
-\sum_{\ell=1}^{n-1}\sum_{j=1}^{r_\ell}\lambda H^{n,n-\ell}_{ij}U^{\ell j}\\
	+\begin{cases}\psi_{1i}(0)u_0,&n=1,\\
\sum_{j=1}^{r_{n-1}}K^{n,n-1}_{ij}U^{n-1,j},&2\le n\le N.\end{cases}
\end{multline}
At the $n$th time step, this $r_n\times r_n$ linear system must be solved
to determine $U^{n1}$, $U^{n2}$,\dots, $U^{nr_n}$ and hence $U(t)$ for~$t\in 
I_n$.

\begin{remark}\label{remark: classical limit}
If we send~$\alpha\to1$, so that the fractional \textsc{ODE} 
in~\eqref{eq: scalar ivp} reduces to the classical \textsc{ODE}
$u'+\lambda u=f(t)$, then $H^{n\bar\ell}_{ij}=0$ for $1\le\bar\ell\le n-1$.
Indeed, since $\omega_1(t)=1$, we see that 
$\rho^{n\bar\ell}_j(t)=\int_{I_\ell}\psi_{\ell j}(s)\,ds$ is constant and
so $(\rho^{n\bar\ell}_j)'(t)=0$ for~$t\in I_n$.  Moreover, 
$(\rho^{n0}_j)'(t)=\psi_{nj}(t)$ so 
$H^{n0}_{ij}=\int_{I_n}\psi_{nj}\psi_{ni}\,dt$.
\end{remark}

\begin{remark}\label{remark: dual}
Later we will show certain symmetry properties of~$H^{n0}_{ij}$ using the 
identity
\begin{multline}\label{eq: nn dual}
\int_a^b\biggl(\frac{\partial}{\partial t}\int_a^t
	\omega_\alpha(t-s)u(s)\,ds\biggr)v(t)\,dt\\
=-\int_a^b u(s)\biggl(\frac{\partial}{\partial s}\int_{s}^b
	\omega_\alpha(t-s)v(t)\,dt\biggr)\,ds.
\end{multline}
In fact, the substitution~$x=t-s$ gives
\begin{align*}
\frac{\partial}{\partial t}\int_a^t
	\omega_\alpha(t-s)u(s)\,ds
&=\frac{\partial}{\partial t}\int_0^{t-a}
	\omega_\alpha(x)u(t-x)\,dx\\
&=\omega_\alpha(t-a)u(a)+\int_0^{t-a}
	\omega_\alpha(x)u'(t-x)\,dx\\
&=\omega_\alpha(t-a)u(t_{n-1})+\int_a^t
	\omega_\alpha(t-s)u'(s)\,ds,
\end{align*}
and \eqref{eq: nn dual} follows after reversing the order of 
integration and then integrating by parts.  Similarly, for $\ell\le n-1$,
\begin{equation}\label{eq: nl dual}
\int_a^b\biggl(\frac{\partial}{\partial t}\int_a^b
	\omega_\alpha(t-s)u(s)\,ds\biggr)v(t)\,dt
=-\int_a^bu(s)\biggl(\frac{\partial}{\partial s}\int_a^b
	\omega_\alpha(t-s)v(t)\,dt\biggr)\,ds.
\end{equation}
\end{remark}
\section{Evaluation of the coefficients}\label{sec: evaluation}
To compute $G^n_{ij}$, $H^{n\ell}_{ij}$~and $K^{n,n-1}_{ij}$ it is 
convenient to map each closed subinterval~$\bar I_n=[t_{n-1},t_n]$ to the 
reference element~$[-1,1]$.  We therefore define the affine 
function~$\mathsf{t}_n:[-1,1]\to\bar I_n$ by
\[
\mathsf{t}_n(\tau)=\tfrac12\bigl[(1-\tau)t_{n-1}+(1+\tau)t_n\bigr]
    \quad\text{for $-1\le\tau\le1$,}
\]
and let
\[
\Psi_{nj}(\tau)=\psi_{nj}(t)\quad\text{for $t=\mathsf{t}_n(\tau)$
and $-1\le\tau\le1$.}
\]
In this way,
\begin{equation}\label{eq: Gn}
G^n_{ij}=\Psi_{nj}(-1)\Psi_{ni}(-1)
	+\int_{-1}^1\Psi_{nj}'(\tau)\Psi_{ni}(\tau)\,d\tau
\end{equation}
and
\begin{equation}\label{eq: Kn}
K^{n,n-1}_{ij}=\Psi_{n-1,j}(+1)\Psi_{ni}(-1).
\end{equation}
Both of these coefficients are readily computed; the remainder of this
section is devoted to~$H^{n\bar\ell}_{ij}$.  The formulae in the next lemma 
allow us to compute~$H^{n0}_{ij}$ to machine precision via Gauss--Legendre and 
Gauss--Jacobi quadrature.

\begin{lemma}\label{lem: Hnn}
If we define the polynomial
\[
\Phi^n_{ij}(y)=\frac12\int_{-1}^1
    \Psi_{nj}\bigl(\tfrac12(1-y)(1+z)-1\bigr)
    \Psi'_{ni}\bigl(1-\tfrac12(1-y)(1-z)\bigr)\,dz,
\]
then
\begin{align*}
H^{n0}_{ij}=\frac{(k_n/2)^\alpha}{\Gamma(\alpha)}&\biggl(
    \Psi_{ni}(1)\int_{-1}^1(1-\sigma)^\alpha\Psi_{nj}(\sigma)\,d\sigma\\
&\qquad{}-\int_{-1}^1(1+y)^{\alpha-1}(1-y)\Phi^n_{ij}(y)\,dy\biggr).
\end{align*}
\end{lemma}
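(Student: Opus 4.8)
The plan is to integrate by parts so as to transfer the time derivative from the convolution $\rho^{n0}_j$ onto the test function $\psi_{ni}$; this produces the boundary term that becomes the first term of the lemma, together with a weakly singular double integral that a Duffy-type substitution turns into the second term. Since $\rho^{n0}_j(t_{n-1})=0$, integration by parts over $I_n$ gives $H^{n0}_{ij}=\rho^{n0}_j(t_n)\,\psi_{ni}(t_n)-\int_{t_{n-1}}^{t_n}\rho^{n0}_j(t)\psi_{ni}'(t)\,dt$, and expanding $\rho^{n0}_j$ shows the last integral equals $\int\!\!\int\omega_\alpha(t-s)\psi_{nj}(s)\psi_{ni}'(t)\,ds\,dt$ taken over the triangle $t_{n-1}<s<t<t_n$. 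Equivalently, one may invoke the duality \eqref{eq: nn dual} of \cref{remark: dual} with $a=t_{n-1}$, $b=t_n$, $u=\psi_{nj}$ and $v=\psi_{ni}$.

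For the boundary term I would substitute $s=\mathsf{t}_n(\sigma)$, so that $t_n-s=\tfrac12 k_n(1-\sigma)$ and $ds=\tfrac12 k_n\,d\sigma$. Because $\psi_{ni}(t_n)=\Psi_{ni}(1)$, the product $\rho^{n0}_j(t_n)\psi_{ni}(t_n)$ reduces to $\Psi_{ni}(1)$ times $(k_n/2)^\alpha/\Gamma(\alpha)$ times a Gauss--Jacobi integral of $\Psi_{nj}$ against the appropriate power of $1-\sigma$, which is the first contribution claimed. A useful check on the constants is the piecewise-constant case $\Psi_{nj}\equiv1$: then $\Psi_{ni}'\equiv0$, hence $\Phi^n_{ij}\equiv0$, and the boundary term alone must recover the known value $H^{n0}_{11}=\omega_{\alpha+1}(k_n)$, which pins down both the prefactor and the Jacobi exponent.

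The real content is the double integral. Applying $s=\mathsf{t}_n(\sigma)$ and $t=\mathsf{t}_n(\tau)$, and using $\psi_{ni}'(t)=\tfrac{2}{k_n}\Psi_{ni}'(\tau)$ to cancel one of the two Jacobian factors, the powers of $k_n$ collapse to a single $(k_n/2)^\alpha$ and the integral becomes $\tfrac{(k_n/2)^\alpha}{\Gamma(\alpha)}\int_{-1}^1\!\int_{-1}^{\tau}\Psi_{nj}(\sigma)(\tau-\sigma)^{\alpha-1}\Psi_{ni}'(\tau)\,d\sigma\,d\tau$. I would then introduce the Duffy transformation $(\sigma,\tau)\mapsto(y,z)$ defined by $\sigma=\tfrac12(1-y)(1+z)-1$ and $\tau=1-\tfrac12(1-y)(1-z)$. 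Its purpose is to trivialise the singular factor: a direct computation gives $\tau-\sigma=1+y$, so that $(\tau-\sigma)^{\alpha-1}=(1+y)^{\alpha-1}$ is a function of $y$ alone, while the Jacobian works out to $|\partial(\sigma,\tau)/\partial(y,z)|=\tfrac12(1-y)$. After substituting, the inner $z$-integral is exactly $\Phi^n_{ij}(y)$, and one arrives at $\tfrac{(k_n/2)^\alpha}{\Gamma(\alpha)}\int_{-1}^1(1+y)^{\alpha-1}(1-y)\Phi^n_{ij}(y)\,dy$, matching the second term.

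The main obstacle is justifying this change of variables: I must verify that the map is a bijection of the open square $(-1,1)^2$ onto the open triangle $\{-1<\sigma<\tau<1\}$, with the edge $y=1$ degenerating to the corner $(\sigma,\tau)=(-1,1)$. Checking the images of the four edges and confirming that the Jacobian does not vanish on the interior settles this, after which only routine bookkeeping of constants remains. The reason for choosing precisely this substitution is the payoff described around the lemma: both final integrands are smooth functions multiplied by standard Jacobi weights---a power of $1-\sigma$ in the first term and $(1+y)^{\alpha-1}$ in the second---so each is computable to machine precision by Gauss--Jacobi quadrature.
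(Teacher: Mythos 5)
Your proof is correct and follows essentially the same route as the paper: integration by parts using $\rho^{n0}_j(t_{n-1})=0$, mapping to the reference element, and then the Duffy-type change of variables (which the paper performs as two successive one-dimensional substitutions, $1+y=\tau-\sigma$ followed by $\tau=\tfrac12[(1-z)y+(1+z)]$, rather than your single two-dimensional map with Jacobian $\tfrac12(1-y)$ --- the composite is identical). One remark: your lowest-order sanity check $H^{n0}_{11}=\omega_{\alpha+1}(k_n)$ forces the Jacobi exponent in the boundary term to be $\alpha-1$, which is what the derivation (and the paper's own proof) actually yields, so the exponent $\alpha$ printed in the lemma statement appears to be a typo; had you carried that check through explicitly you would have caught the discrepancy rather than asserting agreement with ``the first contribution claimed.''
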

\begin{proof}
Since $\rho^{n0}_j(t_{n-1})=0$, integration by parts gives
\begin{align*}
H^{n0}_{ij}&=\rho^{n0}_j(t_n)\psi_{ni}(t_n)
	-\int_{I_n}\rho^{n0}_j(t)\psi'_{ni}(t)\,dt\\
	&=\rho^{n0}_j(t_n)\Psi_{ni}(1)-\int_{-1}^1
	\rho^{n0}_j\bigl(\mathsf{t}_n(\tau)\bigr)\Psi_{ni}'(\tau)\,d\tau,
\end{align*}
and since $\mathsf{t}_n(\tau)-\mathsf{t}_n(\sigma)=(\tau-\sigma)k_n/2$, the
substitution $s=\mathsf{t}_n(\sigma)$ yields
\begin{align*}
\rho^{n0}_j\bigl(\mathsf{t}_n(\tau)\bigr)&=\frac{k_n}{2}\int_{-1}^\tau
	\omega_\alpha\bigl(\mathsf{t}_n(\tau)-\mathsf{t}_n(\sigma)\bigr)
	\Psi_{nj}(\sigma)\,d\sigma\\
	&=\frac{(k_n/2)^\alpha}{\Gamma(\alpha)}
	\int_{-1}^\tau(\tau-\sigma)^{\alpha-1}\Psi_{nj}(\sigma)\,d\sigma.
\end{align*}
Thus, 
\[
H^{n0}_{ij}=\frac{(k_n/2)^\alpha}{\Gamma(\alpha)}\biggl(\Psi_{ni}(1)
	\int_{-1}^1(1-\sigma)^{\alpha-1}\Psi_{nj}(\sigma)\,d\sigma
	-B^n_{ij}\biggr),
\]
where
\[
B^n_{ij}=\int_{-1}^1\int_{-1}^\tau(\tau-\sigma)^{\alpha-1}\Psi_{nj}(\sigma)
	\,d\sigma\,\Psi_{ni}'(\tau)\,d\tau.
\]
We make the substitution $1+y=\tau-\sigma$, which results in a fixed 
singularity at~$y=-1$, and then reverse the order of integration:
\begin{align*}
B^n_{ij}&=\int_{-1}^1\int_{-1}^\tau(1+y)^{\alpha-1}\Psi_{nj}(\tau-y-1)\,dy
	\,\Psi_{ni}'(\tau)\,d\tau\\
	&=\int_{-1}^1(1+y)^{\alpha-1}\int_y^1
	\Psi_{nj}(\tau-y-1)\Psi_{ni}'(\tau)\,d\tau\,dy.
\end{align*}
The substitution $\tau=\tfrac12\bigl[(1-z)y+(1+z)\bigr]$ then yields
\[
\int_y^1\Psi_{nj}(\tau-y-1)\Psi_{ni}'(\tau)\,d\tau\,dy=(1-y)\Phi^n_{ij}(y),
\]
implying the desired formula for~$H^{n0}_{ij}$.
\end{proof}

To deal with~$H^{n,n-\ell}_{ij}$ for~$\ell\le n-1$, we introduce the notation
\[
t_{n-1/2}=\mathsf{t}_n(0)=\tfrac12(t_{n-1}+t_n)\quad\text{and}\quad
D_{n\bar\ell}=D_{n,n-\ell}=t_{n-1/2}-t_{\ell-1/2},
\]
with
\[
\Delta_{n\bar\ell}(\tau,\sigma)=\Delta_{n,n-\ell}(\tau,\sigma)
	=\frac{\tau k_n-\sigma k_\ell}{2D_{n\bar\ell}},
\]
so that
\[
\mathsf{t}_n(\tau)-\mathsf{t}_\ell(\sigma)
=D_{n\bar\ell}\bigl(1+\Delta_{n\bar\ell}(\tau,\sigma)\bigr).
\]

\begin{lemma}\label{lem: H A B C}
If $1\le\ell\le n-1$, then
\[
H^{n\bar\ell}_{ij}
	=\frac{D_{n\bar\ell}^{\alpha-1}}{\Gamma(\alpha)}\,\frac{k_\ell}{2}
	\bigl(\Psi_{ni}(1)\mathcal{A}^{n\bar\ell}_j-
	\Psi_{ni}(-1)\mathcal{B}^{n\bar\ell}_j
	-\mathcal{C}^{n\bar\ell}_{ij}\bigr),
\]
where
\begin{align*}
\mathcal{A}^{n\bar\ell}_j&=\int_{-1}^1
\bigl(1+\Delta_{n\bar\ell}(1,\sigma)\bigr)^{\alpha-1}\Psi_{\ell j}(\sigma)
\,d\sigma,\\
\mathcal{B}^{n\bar\ell}_j&=\int_{-1}^1
\bigl(1+\Delta_{n\bar\ell}(-1,\sigma)\bigr)^{\alpha-1}\Psi_{\ell j}(\sigma)
\,d\sigma,\\
\mathcal{C}^{n\bar\ell}_{ij}&=\int_{-1}^1\Psi_{ni}'(\tau)\int_{-1}^1
	\bigl(1+\Delta_{n\bar\ell}(\tau,\sigma)\bigr)^{\alpha-1}
	\Psi_{\ell j}(\sigma)\,d\sigma\,d\tau.
\end{align*}
\end{lemma}
\begin{proof}
Integrating by parts, we find that
\begin{align*}
H^{n\bar\ell}_{ij}&=\rho^{n\bar\ell}_j(t_n)\psi_{ni}(t_n)
	-\rho^{n\bar\ell}_j(t_{n-1})\psi_{ni}(t_{n-1})
	-\int_{I_n}\rho^{n\bar\ell}_j(t)\psi_{ni}'(t)\,dt\\
	&=\rho^{n\bar\ell}_j(t_n)\Psi_{ni}(1)
	-\rho^{n\bar\ell}_j(t_{n-1})\Psi_{ni}(-1)
	-\int_{-1}^1\rho^{n\bar\ell}_j\bigl(\mathsf{t}_n(\tau)\bigr)
		\Psi'_{ni}(\tau)\,d\tau.
\end{align*}
The substitution~$s=\mathsf{t}_\ell(\sigma)$ gives
\[
\rho^{n\bar\ell}_j\bigl(\mathsf{t}_n(\tau)\bigr)
	=\frac{D_{n\bar\ell}^{\alpha-1}}{\Gamma(\alpha)}\,\frac{k_\ell}{2}
	\int_{-1}^1\bigl(1+\Delta_{n\bar\ell}(\tau,\sigma)\bigr)^{\alpha-1}
	\Psi_{\ell j}(\sigma)\,d\sigma,
\]
and the formula for~$H^{n\bar\ell}_{ij}$ follows at once.
\end{proof}

Notice that
\[
1+\Delta_{n\bar\ell}(1,\sigma)
=\frac{2(t_n-t_\ell)+(1-\sigma)k_\ell}{k_n+2(t_{n-1}-t_l)+k_\ell}>0
\quad\text{for $1\le\ell\le n-1$,}
\]
so the integrand of~$\mathcal{A}^{n\bar\ell}_{ij}$ is always smooth.  However,
\[
1+\Delta_{n\bar\ell}(-1,\sigma)
=\frac{2(t_{n-1}-t_\ell)+(1-\sigma)k_\ell}{k_n+2(t_{n-1}-t_l)+k_\ell},
\]
so the integrands of $\mathcal{B}^{n\bar\ell}_j$~and $\mathcal{C}^{n\bar\ell}_j$
are weakly singular if~$\bar\ell=1$ (i.e., if $\ell=n-1$).  The next lemma 
provides alternative expressions that are amenable to Gauss--Jacobi and 
Gauss--Legendre quadrature.

\begin{lemma}\label{lem: B C n-1}
Let $\rho_n=k_n/k_{n-1}$.  Then,
\begin{align*}
\mathcal{A}^{n1}_j&=(1+\rho_n)^{1-\alpha}
\int_{-1}^1(2\rho_n+1-\sigma)^{\alpha-1}\Psi_{n-1,j}(\sigma)\,d\sigma,\\
\mathcal{B}^{n1}_j&=(1+\rho_n)^{1-\alpha}
\int_{-1}^1(1-\sigma)^{\alpha-1}\Psi_{n-1,j}(\sigma)\,d\sigma
\end{align*}
and
\begin{multline*}
\mathcal{C}^{n1}_{ij}=(1+\rho_n)^{1-\alpha}\biggl(\\
\int_{-1}^1(1+\tau)^\alpha\Psi'_{ni}(\tau)\int_0^1(\rho_n+z)^{\alpha-1}
	\Psi_{n-1,j}\bigl(1-z(1+\tau)\bigr)\,dz\,d\tau\\
+\int_{-1}^1(1-\sigma)^\alpha\Psi_{n-1,j}(\sigma)\int_0^1
(\rho_nz+1)^{\alpha-1}\Psi_{ni}'\bigl(z(1-\sigma)-1\bigr)\,dz\,d\sigma\biggr).
\end{multline*}
\end{lemma}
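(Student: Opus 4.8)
The plan is to specialise the definitions of $\mathcal{A}^{n\bar\ell}_j$, $\mathcal{B}^{n\bar\ell}_j$ and $\mathcal{C}^{n\bar\ell}_{ij}$ from \cref{lem: H A B C} to the case $\bar\ell=1$ (that is, $\ell=n-1$) and to reduce everything to one algebraic identity. First I would evaluate $D_{n1}=t_{n-1/2}-t_{n-3/2}=\tfrac12(t_n-t_{n-2})=\tfrac12(k_n+k_{n-1})=\tfrac12 k_{n-1}(1+\rho_n)$, and then, using $k_\ell=k_{n-1}$ and $k_n=\rho_nk_{n-1}$ in $\Delta_{n1}(\tau,\sigma)=(\tau k_n-\sigma k_{n-1})/(2D_{n1})$, obtain $\Delta_{n1}(\tau,\sigma)=(\tau\rho_n-\sigma)/(1+\rho_n)$, and hence the key identity
\[
1+\Delta_{n1}(\tau,\sigma)=\frac{(1+\tau)\rho_n+(1-\sigma)}{1+\rho_n}.
\]
This lets me pull a factor $(1+\rho_n)^{1-\alpha}$ out of every occurrence of $(1+\Delta_{n1})^{\alpha-1}$, which accounts for the common prefactor in all three formulae.

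The expressions for $\mathcal{A}^{n1}_j$ and $\mathcal{B}^{n1}_j$ then drop out at once: setting $\tau=1$ and $\tau=-1$ in the identity makes the numerators $2\rho_n+1-\sigma$ and $1-\sigma$, respectively, which are exactly the claimed integrands.

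The substantive step is $\mathcal{C}^{n1}_{ij}$. After extracting $(1+\rho_n)^{1-\alpha}$, the remaining kernel is $[(1+\tau)\rho_n+(1-\sigma)]^{\alpha-1}$, which is weakly singular at the corner $(\tau,\sigma)=(-1,1)$. I would move the singularity to the origin by setting $u=1+\tau$ and $v=1-\sigma$, so the kernel becomes $(\rho_n u+v)^{\alpha-1}$ on the square $[0,2]^2$, and then split that square into the triangles $\{0\le v\le u\}$ and $\{0\le u\le v\}$. On the first triangle the Duffy transformation $v=zu$ (with $z\in[0,1]$ and $dv=u\,dz$) factors the kernel as $u^{\alpha-1}(\rho_n+z)^{\alpha-1}$; combined with the Jacobian $u$ this gives the weight $u^\alpha=(1+\tau)^\alpha$ together with the bounded factor $(\rho_n+z)^{\alpha-1}$, reproducing the first inner integral once I revert to $\tau$. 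On the second triangle the symmetric transformation $u=zv$ produces $v^\alpha=(1-\sigma)^\alpha$ and $(\rho_nz+1)^{\alpha-1}$, giving the second inner integral after reverting to $\sigma$. Adding the two contributions yields the stated formula.

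The main obstacle is carrying out the corner splitting cleanly: I must check that the two triangles exactly tile $[0,2]^2$, keep track of the Jacobians and of the arguments $1-z(1+\tau)$ and $z(1-\sigma)-1$ passed to $\Psi_{n-1,j}$ and $\Psi_{ni}'$, and verify that after each transformation the inner factor $(\rho_n+z)^{\alpha-1}$ or $(\rho_nz+1)^{\alpha-1}$ is smooth on $[0,1]$ (since $\rho_n>0$) while all remaining singular behaviour has been absorbed into the Jacobi weights $(1+\tau)^\alpha$ and $(1-\sigma)^\alpha$. Everything is then amenable to Gauss--Jacobi and Gauss--Legendre quadrature, whereas the $\mathcal{A}$ and $\mathcal{B}$ parts are entirely routine.
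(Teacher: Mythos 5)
Your proposal is correct and follows essentially the same route as the paper: the same evaluation of $1+\Delta_{n1}$ giving the common factor $(1+\rho_n)^{1-\alpha}$, the same corner shift $(1+\tau,1-\sigma)\mapsto(0,0)$ on $[0,2]^2$, and the same triangle splitting with the Duffy substitutions $v=zu$ and $u=zv$ producing the Jacobi weights $(1+\tau)^\alpha$ and $(1-\sigma)^\alpha$. The only differences are notational (the paper writes $x=1+\tau$, $y=1-\sigma$ and keeps $k_n$, $k_{n-1}$ explicit until the end rather than extracting $\rho_n$ immediately).
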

\begin{proof}
Since $1+\Delta_{n1}(-1,\sigma)=k_{n-1}(1-\sigma)/(k_n+k_{n-1})$, the
formula for~$\mathcal{B}^{n1}_{ij}$ follows at once.  To deal 
with~$\mathcal{C}^{n,1}_{ij}$ we begin by mapping~$[-1,1]^2$ onto~$[0,2]^2$ 
with the substitution~$(\tau,\sigma)=(x-1,1-y)$.  In this way, the singularity
at~$(\tau,\sigma)=(-1,1)$ moves to~$(x,y)=(0,0)$, and
\[
\mathcal{C}^{n1}_{ij}=\int_0^2\int_0^2
	\bigl(1+\Delta_{n1}(x-1,1-y)\bigr)^{\alpha-1}
	\Psi_{n-1,j}(1-y)\Psi_{ni}'(x-1)\,dx\,dy
\]
with
\[
1+\Delta_{n1}(x-1,1-y)=\frac{xk_n+yk_{n-1}}{k_n+k_{n-1}}.
\]
By splitting the integration domain~$[0,2]^2$ into the triangular halves where 
$x>y$~and $x<y$, we obtain
\begin{align*}
\mathcal{C}^{n1}_{ij}&=\int_0^2\Psi_{ni}'(x-1)\int_0^x
\biggl(\frac{xk_n+yk_{n-1}}{k_n+k_{n-1}}\biggr)^{\alpha-1}
\Psi_{n-1,j}(1-y)\,dy\,dx\\
&\qquad{}+\int_0^2\Psi_{n-1,j}(1-y)\int_0^y
\biggl(\frac{xk_n+yk_{n-1}}{k_n+k_{n-1}}\biggr)^{\alpha-1}
\Psi_{ni}'(x-1)\,dx\,dy.
\end{align*}
The substitution~$y=zx$ tranforms the inner integral in the first term to
\[
x^\alpha\int_0^1\biggl(\frac{k_n+zk_{n-1}}{k_n+k_{n-1}}\biggr)^{\alpha-1}
	\Psi_{n-1,j}(1-zx)\,dz,
\]
and the substitution $x=zy$ transforms that in the second to
\[
y^\alpha\int_0^1\biggl(\frac{zk_n+k_{n-1}}{k_n+k_{n-1}}\biggr)^{\alpha-1}
	\Psi_{ni}'(zy-1)\,dz.
\]
Thus,
\begin{align*}
\mathcal{C}^{n1}_{ij}&=\int_0^2x^\alpha\Psi'_{ni}(x-1)\int_0^1
\biggl(\frac{k_n+zk_{n-1}}{k_n+k_{n-1}}\biggr)^{\alpha-1}
	\Psi_{n-1,j}(1-zx)\,dz\,dx\\
&\qquad{}+\int_0^2y^\alpha\Psi_{n-1,j}(1-y)\int_0^1
\biggl(\frac{zk_n+k_{n-1}}{k_n+k_{n-1}}\biggr)^{\alpha-1}
	\Psi_{ni}'(zy-1)\,dz\,dy.
\end{align*}
Now make the substitutions $x=1+\tau$ and $y=1-\sigma$.
\end{proof}

We also have the following alternative representation.

\begin{lemma}\label{lem: Hnl alt n-2}
If $1\le\ell\le n-2$, then
\[
H^{n\bar\ell}_{ij}=-\frac{1-\alpha}{\Gamma(\alpha)}\,\frac{k_nk_\ell}{4}\,
	D_{n\bar\ell}^{\alpha-2}
	\int_{-1}^1\Psi_{ni}(\tau)\int_{-1}^1
	\bigl(1+\Delta_{n\ell}(\tau,\sigma)\bigr)^{\alpha-2}
	\Psi_{\ell j}(\sigma)\,d\sigma\,d\tau.
\]
\end{lemma}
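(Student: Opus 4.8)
The plan is to bypass the integration by parts used in \cref{lem: H A B C} and instead differentiate $\rho^{n\bar\ell}_j$ directly under the integral sign. The crucial observation is that the hypothesis $\ell\le n-2$ separates the two subintervals: for $t\in\bar I_n$ and $s\in\bar I_\ell$ we have $t-s\ge t_{n-1}-t_{n-2}=k_{n-1}>0$. Consequently $\omega_\alpha(t-s)$ is a smooth function of $t$ on $\bar I_n$ for every fixed $s\in\bar I_\ell$, there is no weak singularity to contend with, and differentiation under the integral sign in $\rho^{n\bar\ell}_j(t)=\int_{I_\ell}\omega_\alpha(t-s)\psi_{\ell j}(s)\,ds$ is legitimate.

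First I would record that $\omega_\alpha'(x)=-\frac{1-\alpha}{\Gamma(\alpha)}\,x^{\alpha-2}$, which is immediate from $\omega_\alpha(x)=x^{\alpha-1}/\Gamma(\alpha)$. Differentiating under the integral then gives
\[
(\rho^{n\bar\ell}_j)'(t)=-\frac{1-\alpha}{\Gamma(\alpha)}\int_{I_\ell}(t-s)^{\alpha-2}\psi_{\ell j}(s)\,ds,
\]
and inserting this into $H^{n\bar\ell}_{ij}=\int_{I_n}(\rho^{n\bar\ell}_j)'\psi_{ni}\,dt$ produces the absolutely convergent double integral
\[
H^{n\bar\ell}_{ij}=-\frac{1-\alpha}{\Gamma(\alpha)}\int_{I_n}\int_{I_\ell}(t-s)^{\alpha-2}\psi_{\ell j}(s)\psi_{ni}(t)\,ds\,dt.
\]
I would then map both subintervals to the reference element by putting $t=\mathsf{t}_n(\tau)$ and $s=\mathsf{t}_\ell(\sigma)$, so that $dt=(k_n/2)\,d\tau$ and $ds=(k_\ell/2)\,d\sigma$, and use the identity $\mathsf{t}_n(\tau)-\mathsf{t}_\ell(\sigma)=D_{n\bar\ell}\bigl(1+\Delta_{n\bar\ell}(\tau,\sigma)\bigr)$ recorded just before the lemma. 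Since both factors are positive, $(t-s)^{\alpha-2}=D_{n\bar\ell}^{\alpha-2}\bigl(1+\Delta_{n\bar\ell}(\tau,\sigma)\bigr)^{\alpha-2}$, and collecting the Jacobian $k_nk_\ell/4$ together with the constant $D_{n\bar\ell}^{\alpha-2}$ yields the stated formula after writing $\psi_{ni}(t)=\Psi_{ni}(\tau)$ and $\psi_{\ell j}(s)=\Psi_{\ell j}(\sigma)$.

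The one genuinely delicate point is the justification of the differentiation under the integral, equivalently the absolute convergence of the double integral, and this is exactly where $\ell\le n-2$ is needed. For $\ell=n-1$ the gap closes: $(t-s)^{\alpha-2}$ fails to be integrable near the corner $t=s=t_{n-1}$ because $\alpha-2<-1$, and indeed $(\rho^{n,1}_j)'(t)$ blows up like $(t-t_{n-1})^{\alpha-1}$ as $t\downarrow t_{n-1}$, so this representation would break down and one must instead retain the form of \cref{lem: B C n-1}. Apart from confirming this separation and the convergence it guarantees, the remaining work is a routine change of variables, so I anticipate no obstacle beyond careful bookkeeping of the Jacobian and constant factors.
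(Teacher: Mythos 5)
Your proof is correct and is essentially the paper's own argument: differentiate $\rho^{n\bar\ell}_j$ under the integral sign (the paper writes $(\rho^{n\bar\ell}_j)'(t)=\int_{I_\ell}\omega_{\alpha-1}(t-s)\psi_{\ell j}(s)\,ds$, which is exactly your expression since $\Gamma(\alpha)=(\alpha-1)\Gamma(\alpha-1)$), and then substitute $t=\mathsf{t}_n(\tau)$ and $s=\mathsf{t}_\ell(\sigma)$. One peripheral quibble: your closing claim that the double integral diverges when $\ell=n-1$ is not right --- the singularity sits only at the corner $t=s=t_{n-1}$, where $(t-s)^{\alpha-2}$ integrated against two-dimensional measure behaves like $\int r^{\alpha-1}\,dr<\infty$, so the representation actually remains valid there; it is merely useless in that case because the integrand is no longer smooth and Gauss--Legendre quadrature would converge slowly.
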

\begin{proof}
When $\ell\le n-2$,
\[
(\rho^{n\bar\ell}_j)'(t)=\int_{I_\ell}\omega_{\alpha-1}(t-s)\psi_{\ell j}(s)\,ds
\quad\text{for $t>t_\ell$,}
\]
and so
\begin{equation}\label{eq: Hnl n-2}
H^{n\bar\ell}_{ij}=\int_{I_n}\psi_{ni}(t)\int_{I_\ell}\omega_{\alpha-1}(t-s)
	\psi_{\ell j}(s)\,ds.
\end{equation}
The result now follows via the substitutions $t=\mathsf{t}_n(\tau)$~and
$s=\mathsf{t}_\ell(\sigma)$, noting that 
$\Gamma(\alpha)=(\alpha-1)\Gamma(\alpha-1)$.
\end{proof}

\begin{remark}\label{remark: uniform}
If the time levels are uniformly spaced, and if the reference basis 
functions are the same for each subinterval, say
\[
k_\ell=k,\quad r_\ell=r\quad\text{and}\quad\Psi_{\ell j}=\Psi_j
\quad\text{for $1\le\ell\le n$ and $1\le j\le r$,}
\]
then
\[
D_{n\bar\ell}=\bar\ell k\quad\text{and}\quad
\Delta_{n\bar\ell}(\tau,\sigma)=\frac{\tau-\sigma}{2\bar\ell},
\]
so the formulae of \cref{lem: H A B C} show that $H^{n\bar\ell}_{ij}$ depends
on $n$~and $\ell$ only through the difference~$\bar\ell=n-\ell$; for
further details, see \cref{example: uniform} below.
\end{remark}
\section{Spatial discretisation}\label{sec: spatial}
The initial-boundary value problem~\eqref{eq: fpde} is known to be 
well-posed~\cite{LeMcLeanStynes2019,McLean2010,McLeanEtAl2019}.  Let
$\iprod{u,v}=\int_\Omega uv$ denote the usual inner product in~$L^2(\Omega)$,
and let $a(u,v)$ denote the bilinear form associated with~$A$ via the first 
Green identity.  For example, if $A=-\nabla^2$ then 
$a(u,v)=\int_\Omega\nabla u\cdot\nabla v$. In this way, the weak 
solution~$u:(0,T]\to H^1_0(\Omega)$ satisfies
\[
\iprod{\partial_t u,v}+a(\partial_t^{1-\alpha}u,v)=\iprod{f(t),v}
    \quad\text{for $v\in H^1_0(\Omega)$ and $0<t\le T$.}
\]
We choose a finite dimensional subspace $V_n\subseteq H^1_0(\Omega)$
for~$0\le n\le N$, and form the vector~$\boldsymbol{V}=(V_1,\ldots,V_N)$. For 
example, $V_n$ might be a (conforming) finite element space constructed using a 
triangulation of~$\Omega$. Our trial 
space~$\mathcal{X}=\mathcal{X}(\boldsymbol{t},\boldsymbol{r},\boldsymbol{V})$
then consists of the functions $X:I\to H^1_0(\Omega)$ such that
$X|_{I_n}\in\mathbb{P}_{r_n-1}(I_n;V_n)$, that is, the restriction~$X|_{I_n}$
is a polynomial in~$t$ of degree at most~$r_n-1$, with coefficients from~$V_n$.
Generalising~\eqref{eq: scalar dG}, the \textsc{dG} solution~$U\in\mathcal{X}$
of~\eqref{eq: fpde} satisfies
\begin{equation}\label{eq: dG}
\bigiprod{\jump{U}^{n-1},X^{n-1}_+}+\int_{I_n}\iprod{\partial_t U,X}\,dt
    +\int_{I_n}a(\partial_t^{1-\alpha}U,X)\,dt=\int_{I_n}\iprod{f(t),X}\,dt
\end{equation}
for $X\in\mathbb{P}_{r_n-1}(I_n;V_n)$ and $1\le n\le N$, with $U^0_-=U_0$ for a 
suitable $U_0\in V_0$ such that $U_0\approx u_0$.

We choose a basis $\{\phi_{np}\}_{p=1}^{P_n}$ for~$V_n$. In the 
expansion~\eqref{eq: U psi}, the coefficient~$U^{nj}$ is now a function 
in~$V_n$, so there exist real numbers $U^{nj}_q$ such that
\[
U^{nj}(x)=\sum_{q=1}^{P_n}U^{nj}_q\phi_{nq}(x)
\quad\text{for $x\in\Omega$;}
\]
for example, $U^{nj}_q=U^{nj}(x_{nq})$ if $x_{nq}$ is the $q$th free node of a 
finite element mesh and if $\phi_{nq}$ is the corresponding nodal basis 
function.  Similarly, for the discrete initial data, there are real 
numbers~$U_{0q}$ such that
\[
U_0(x)=\sum_{q=1}^{P_0}U_{0q}\phi_{0q}(x)\quad\text{for $x\in\Omega$.}
\]

Choosing $X(x,t)=\psi_{ni}(t)\phi_{nq}(x)$ in~\eqref{eq: dG}, we find that
the equations~\eqref{eq: basic linear system} for time stepping the scalar
problem generalise to
\begin{multline}\label{eq: linear equations}
\sum_{j=1}^{r_n}\sum_{q=1}^{P_n}\bigl(G^n_{ij}M^{nn}_{pq}+
H^{n0}_{ij}A^{nn}_{pq}\bigr)U^{nj}_q=F^{ni}_p
	-\sum_{\ell=1}^{n-1}\sum_{j=1}^{r_\ell}\sum_{q=1}^{P_\ell}
H^{n,n-\ell}_{ij}A^{n\ell}_{pq}U^{\ell j}_q\\
	+\begin{cases}\psi_{1i}(0)\sum_{q=1}^{P_0}M^{10}_{pq}U_{0q},
    &n=1,\\[2\jot]
\sum_{j=1}^{r_{n-1}}\sum_{q=1}^{P_{n-1}}K^{n,n-1}_{ij}M^{n,n-1}_{pq}
    U^{n-1,j}_q,&2\le n\le N,\end{cases}
\end{multline}
where
\[
M^{n\ell}_{pq}=\iprod{\phi_{\ell q},\phi_{np}},\qquad
A^{n\ell}_{pq}=a(\phi_{\ell q},\phi_{np}),\qquad
F^{ni}_p=\int_{I_n}\iprod{f(t),\phi_{np}}\psi_{ni}(t)\,dt.
\]
By introducing the $P_n\times P_\ell$ mass matrix 
$\boldsymbol{M}^{n\ell}=[M^{n\ell}_{pq}]$~and 
stiffness matrix~$\boldsymbol{A}^{n\ell}=[A^{n\ell}_{pq}]$, and forming the 
column vectors
\[
\boldsymbol{U}^{nj}=\begin{bmatrix}
U^{nj}_1\\ U^{nj}_2\\ \vdots\\ U^{nj}_{P_n}\end{bmatrix},\qquad
\boldsymbol{F}^{ni}=\begin{bmatrix}
F^{ni}_1\\ F^{ni}_2\\ \vdots\\ F^{ni}_{P_n}\end{bmatrix},\qquad
\boldsymbol{U}_0=\begin{bmatrix}
U_{01}\\ U_{02}\\ \vdots\\ U_{0P_0}\end{bmatrix},
\]
we can rewrite the equations~\eqref{eq: linear equations} as
\begin{multline}\label{eq: linear equations vector}
\sum_{j=1}^{r_n}\bigl(G^n_{ij}\boldsymbol{M}^{nn}+
H^{n0}_{ij}\boldsymbol{A}^{nn}\bigr)\boldsymbol{U}^{nj}
	=\boldsymbol{F}^{ni}
	-\sum_{\ell=1}^{n-1}\sum_{j=1}^{r_\ell}
H^{n,n-\ell}_{ij}\boldsymbol{A}^{n\ell}\boldsymbol{U}^{\ell j}\\
	+\begin{cases}\psi_{1i}(0)\boldsymbol{M}^{10}\boldsymbol{U}_0,
    &n=1,\\[2\jot]
\sum_{j=1}^{r_{n-1}}K^{n,n-1}_{ij}\boldsymbol{M}^{n,n-1}\boldsymbol{U}^{n-1,j},
    &2\le n\le N.\end{cases}
\end{multline}

To write \eqref{eq: linear equations vector} even more compactly, define
the $r_n\times r_n$ matrix~$\boldsymbol{G}^n=[G^n_{ij}]$ and the 
$r_n\times r_\ell$ matrix~$\boldsymbol{H}^{n\bar\ell}=[H^{n\bar\ell}_{ij}]$, 
together with the (block) column vectors
\[
\boldsymbol{U}^n=\begin{bmatrix}
\boldsymbol{U}^{n1}\\
\boldsymbol{U}^{n2}\\
\vdots\\
\boldsymbol{U}^{nr_n}\end{bmatrix}
\quad\text{and}\quad
\boldsymbol{F}^n=\begin{bmatrix}
\boldsymbol{F}^{n1}\\
\boldsymbol{F}^{n2}\\
\vdots\\
\boldsymbol{F}^{nr_n}\end{bmatrix}.
\]
We also form the $r_n\times r_{n-1}$ 
matrix~$\boldsymbol{K}^{n,n-1}=[K^{n,n-1}_{ij}]$ and the column vector
\[
\boldsymbol{\psi^0_+}=\begin{bmatrix}
\psi_{11}(0)\\                          
\psi_{12}(0)\\                          
\vdots\\
\psi_{1r_n}(0)\\                          
\end{bmatrix}.
\]
Utilising the Kronecker product, the linear 
system~\eqref{eq: linear equations vector} takes the form
\begin{multline}\label{eq: kronecker}
\bigl(\boldsymbol{G}^n\otimes\boldsymbol{M}^{nn}+
\boldsymbol{H}^{n0}\otimes\boldsymbol{A}^{nn}\bigr)\boldsymbol{U}^n
=\boldsymbol{F}^n-\sum_{\ell=1}^{n-1}
\bigl(\boldsymbol{H}^{n,n-\ell}\otimes\boldsymbol{A}^{n\ell}\bigr)
    \boldsymbol{U}^\ell\\
	+\begin{cases}
\bigl(\boldsymbol{\psi}^0_+\otimes\boldsymbol{M}^{10}\bigr)
    \boldsymbol{U}_0,&n=1,\\[2\jot]
\bigl(\boldsymbol{K}^{n,n-1}\otimes\boldsymbol{M}^{n,n-1}\bigr)
\boldsymbol{U}^{n-1,j},&2\le n\le N.\end{cases}
\end{multline}
\section{Legendre polynomials}\label{sec: Legendre}
Let $P_0$, $P_1$, $P_2$, \dots denote the Legendre polynomials with the
standard normalisation $P_j(1)=1$ for all~$j\ge0$.  By choosing
\begin{equation}\label{eq: Psi P}
\Psi_{nj}(\tau)=P_{j-1}(\tau),
\end{equation}
we obtain a convenient and well-conditioned basis for~$\mathbb{P}_{r_n-1}$ 
with the properties
\[
\int_{-1}^1\Psi_{nj}(\tau)\Psi_{ni}(\tau)\,d\tau=\frac{2\delta_{ij}}{2j-1}
\quad\text{and}\quad\Psi_{nj}(-\tau)=(-1)^{j-1}\Psi_{nj}(\tau)
\]
for $i$, $j\in\{1,2,\ldots,r_n\}$.

\begin{lemma}
With the choice~\eqref{eq: Psi P} of basis functions, 
\begin{equation}\label{eq: Psi pm 1}
\Psi_{nj}(1)=1\quad\text{and}\quad
\Psi_{nj}(-1)=(-1)^{j-1},
\end{equation}
and the coefficients \eqref{eq: Gn}~and \eqref{eq: Kn} are given by
\[
G^n_{ij}=\begin{cases}
(-1)^{i+j},&\text{if $i\ge j$,}\\
1,&\text{if $i<j$,}\end{cases}
\]
and
\[
K^{n,n-1}_{ij}=(-1)^{i-1}.
\]
\end{lemma}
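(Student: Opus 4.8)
The plan is to read off both identities directly from the standard properties of the Legendre polynomials combined with the formulae \eqref{eq: Gn} and \eqref{eq: Kn}. First I would record the endpoint values: since $P_m(1)=1$ and $P_m(-1)=(-1)^m$, the choice \eqref{eq: Psi P} gives $\Psi_{nj}(1)=1$ and $\Psi_{nj}(-1)=(-1)^{j-1}$, which is \eqref{eq: Psi pm 1}. Substituting $\Psi_{n-1,j}(+1)=1$ and $\Psi_{ni}(-1)=(-1)^{i-1}$ into \eqref{eq: Kn} then yields $K^{n,n-1}_{ij}=(-1)^{i-1}$ at once, disposing of the second claim.

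For $G^n_{ij}$ I would insert the endpoint values into \eqref{eq: Gn}, so the boundary term becomes $\Psi_{nj}(-1)\Psi_{ni}(-1)=(-1)^{i+j-2}=(-1)^{i+j}$, and the task reduces to the integral $I_{ij}=\int_{-1}^1 P_{j-1}'(\tau)P_{i-1}(\tau)\,d\tau$. When $i\ge j$ the polynomial $P_{j-1}'$ has degree $j-2<i-1$, so orthogonality of $P_{i-1}$ to every lower-degree polynomial forces $I_{ij}=0$, whence $G^n_{ij}=(-1)^{i+j}$. This is exactly the first case of the claim.

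The substantive case is $i<j$. Here I would expand the derivative in the Legendre basis, using the classical identity $P_m'=\sum_l (2l+1)P_l$ with the sum running over $0\le l<m$ for which $m-l$ is odd. Applying the orthogonality relation $\int_{-1}^1 P_l P_k\,d\tau=2\delta_{lk}/(2k+1)$ isolates a single term, giving $I_{ij}=2$ when $j-i$ is odd and $I_{ij}=0$ when $j-i$ is even. The key bookkeeping is then the parity coincidence $i+j\equiv j-i\pmod 2$: when $j-i$ is odd we get $G^n_{ij}=(-1)^{i+j}+2=-1+2=1$, and when $j-i$ is even we get $G^n_{ij}=(-1)^{i+j}+0=1+0=1$. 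Either way $G^n_{ij}=1$, which is the second case.

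The main obstacle is the evaluation of $I_{ij}$ in the range $i<j$, which rests entirely on the Legendre derivative-expansion identity; everything else is a substitution of endpoint values and a short parity argument. I would therefore present the derivative expansion as the one nontrivial ingredient and keep the parity reconciliation explicit, since it is precisely the cancellation between the boundary term and the integral that makes both sub-cases of $i<j$ collapse to the single value~$1$.
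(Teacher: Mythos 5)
Your proof is correct. The endpoint evaluations, the formula for $K^{n,n-1}_{ij}$, and the treatment of the case $i\ge j$ (orthogonality of $P_{i-1}$ to the lower-degree polynomial $P_{j-1}'$) coincide with the paper's argument. Where you diverge is in the one substantive step, the evaluation of $I_{ij}=\int_{-1}^1P_{j-1}'P_{i-1}\,d\tau$ for $i<j$: you expand $P_{j-1}'$ in the Legendre basis via the identity $P_m'=\sum(2l+1)P_l$ (sum over $l<m$ with $m-l$ odd), obtain $I_{ij}=2$ or $0$ according to the parity of $j-i$, and then reconcile with the boundary term $(-1)^{i+j}$ using $i+j\equiv j-i\pmod 2$. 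The paper instead integrates by parts once: since $P_{i-1}'$ has degree $i-2<j-1$, the remaining integral vanishes and $I_{ij}=\bigl[P_{j-1}P_{i-1}\bigr]_{-1}^1=1-(-1)^{i+j}$, so the $(-1)^{i+j}$ cancels identically without any case split on parity. The paper's route is slightly leaner and reuses the same orthogonality fact symmetrically in both cases; yours makes the mechanism of the cancellation more explicit at the cost of invoking the derivative-expansion identity and an extra parity check. Both are complete and correct.
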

\begin{proof}
The properties~\eqref{eq: Psi pm 1} follow from $P_j(1)=1$~and 
$P_j(-1)=(-1)^j$.  Hence, the formula for~$K^{n,n-1}_{ij}$ follows
from~\eqref{eq: Kn}, and by~\eqref{eq: Gn},
\[
G^n_{ij}=(-1)^{i+j}+E_{ij}\quad\text{where}\quad
E_{ij}=\int_{-1}^1P_{j-1}'(\tau)P_{i-1}(\tau)\,d\tau.
\]
If $j\le i$, then $E_{ij}=0$ because $P_{j-1}'$ is orthogonal to~$P_{i-1}$.
Otherwise, if $j>i$, then $P_{j-1}$ is orthogonal to~$P_{i-1}'$ so
integration by parts gives
\[
E_{ij}=\bigl[P_{j-1}(x)P_{i-1}(x)\bigr]_{-1}^1
    -\int_{-1}^1P_{j-1}(x)P_{i-1}'(x)\,dx=1-(-1)^{i+j}
\]
and hence $G^n_{ij}=1$.
\end{proof}

\begin{example}
If $r_n=4$ and $r_{n-1}=3$, then the matrices 
$\boldsymbol{G}^n=[G^n_{ij}]$~and $\boldsymbol{K}^{n,n-1}=[K^{n,n-1}_{ij}]$
are
\[
\boldsymbol{G}^n=\left[\begin{array}{rrrr}
 1& 1& 1& 1\\
-1& 1& 1& 1\\
 1&-1& 1& 1\\
-1& 1&-1&\phantom{-}1\end{array}\right]\quad\text{and}\quad
\boldsymbol{K}^{n,n-1}=\left[\begin{array}{rrr}
 1& 1& 1\\
-1&-1&-1\\
 1& 1& 1\\
-1&-1&-1\end{array}\right].
\]
\end{example}

We have no analogous, simple formula for the remaining 
coefficients~$H^{n\ell}_{ij}$.  However, when $\bar\ell=0$ ($\ell=n$) the 
following parity property holds.

\begin{lemma}\label{lem: Hn0 parity}
With the choice~\eqref{eq: Psi P} of basis functions, 
\begin{equation}\label{eq: Hn0 parity}
H^{n0}_{ji}=(-1)^{i+j}H^{n0}_{ij}.
\end{equation}
\end{lemma}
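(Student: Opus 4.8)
The plan is to combine the duality identity~\eqref{eq: nn dual} foreshadowed in \cref{remark: dual} with the reflection symmetry of the Legendre basis. First I would observe that, unwinding the definitions of $H^{n0}_{ij}$ and $\rho^{n0}_j$,
\[
H^{n0}_{ij}=\int_{t_{n-1}}^{t_n}\biggl(\frac{\partial}{\partial t}\int_{t_{n-1}}^t\omega_\alpha(t-s)\psi_{nj}(s)\,ds\biggr)\psi_{ni}(t)\,dt
\]
is exactly the left-hand side of~\eqref{eq: nn dual} with $a=t_{n-1}$, $b=t_n$, $u=\psi_{nj}$, and $v=\psi_{ni}$. Applying that identity moves the $t$-derivative acting on $\psi_{nj}$ onto an $s$-derivative acting on $\psi_{ni}$, giving
\[
H^{n0}_{ij}=-\int_{t_{n-1}}^{t_n}\psi_{nj}(s)\biggl(\frac{\partial}{\partial s}\int_s^{t_n}\omega_\alpha(t-s)\psi_{ni}(t)\,dt\biggr)\,ds.
\]

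Next I would introduce the reflection of $\bar I_n$ about its midpoint, $t\mapsto t_{n-1}+t_n-t$, which corresponds on the reference element to $\tau\mapsto-\tau$; by the stated parity property $\Psi_{nj}(-\tau)=(-1)^{j-1}\Psi_{nj}(\tau)$ this reflection acts on the shape functions by $\psi_{nj}(t_{n-1}+t_n-t)=(-1)^{j-1}\psi_{nj}(t)$. Writing $s'=t_{n-1}+t_n-s$ and $t'=t_{n-1}+t_n-t$, so that $t-s=s'-t'$, I would check that the inner integral in the dual expression becomes $(-1)^{i-1}\rho^{n0}_i(s')$, the orientation reversal in $t$ cancelling against the swapped limits. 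Differentiating in $s$ then contributes an extra factor $ds'/ds=-1$, and changing the outer variable from $s$ to $s'$ brings in the parity factor $(-1)^{j-1}$ from $\psi_{nj}$, its orientation reversal again cancelling. The surviving integral is precisely $H^{n0}_{ji}$.

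Collecting the four surviving sign sources --- the leading minus from~\eqref{eq: nn dual}, the parity factor $(-1)^{i-1}$, the chain-rule factor $-1$, and the parity factor $(-1)^{j-1}$ --- their product is $(-1)^{i+j}$, so I expect to arrive at $H^{n0}_{ij}=(-1)^{i+j}H^{n0}_{ji}$; this is equivalent to the asserted~\eqref{eq: Hn0 parity} after interchanging $i$ and $j$, since $(-1)^{i+j}=\pm1$. The step I expect to be the main obstacle is not conceptual but the careful bookkeeping of these signs: the two orientation reversals (in the inner and the outer integral) must be seen to cancel, while the two parity factors, the differentiation sign, and the leading minus must be seen to survive, so that the net factor is $(-1)^{i+j}$ rather than its negative.
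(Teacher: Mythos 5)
Your proposal is correct and follows essentially the same route as the paper: apply the duality identity~\eqref{eq: nn dual} to pass to the adjoint operator, then use the reflection $\tau\mapsto-\tau$ together with the Legendre parity $\Psi_{nj}(-\tau)=(-1)^{j-1}\Psi_{nj}(\tau)$ to return to the original operator, with the four surviving signs combining to $(-1)^{i+j}$. The only cosmetic difference is that the paper encodes your explicit substitutions as the operator identity $RB^*=BR$ on the reference element, whereas you carry them out directly on $I_n$; the sign bookkeeping you describe checks out.
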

\begin{proof}
Using \eqref{eq: nn dual}, we find that
\[
H^{n0}_{ji}=\int_{-1}^1(BP_{i-1})'(\tau)P_{j-1}(\tau)\,d\tau
    =-\int_{-1}^1P_{i-1}(\sigma)(B^*P_{j-1})'(\sigma)\,d\sigma,
\]
where
\[
(Bv)(\tau)=\int_{-1}^\tau\omega_\alpha(\tau-\sigma)v(\sigma)\,d\sigma
\quad\text{and}\quad
(B^*v)(\sigma)=\int_\sigma^1\omega_\alpha(\tau-\sigma)v(\tau)\,d\tau.
\]
Let $(RV)(\tau)=V(-\tau)$.  A short calculation shows that $RB^*=BR$, so
\begin{align*}
(B^*P_{j-1})'(-\sigma)&=-\frac{d}{d\sigma}\bigl[(B^*P_{j-1})(-\sigma)\bigr]
=-\frac{d}{d\sigma}(RB^*P_{j-1})'(\sigma)\\
&=-(BRP_{j-1})'(\sigma)=(-1)^j(BP_{j-1})'(\sigma),
\end{align*}
and therefore, using the substitution~$\sigma=-x$,
\begin{align*}
H^{n0}_{ji}&=(-1)^{j+1}\int_{-1}^1P_{i-1}(-x)(BP_{j-1})'(x)\,dx\\
    &=(-1)^{i+j}\int_{-1}^1(BP_{j-1})'(x)P_{i-1}(x)\,dx=(-1)^{i+j}H^{n0}_{ij},
\end{align*}
as claimed.
\end{proof}

\begin{remark}\label{remark: H alpha=1}
In the limit as~$\alpha\to1$, we see from \cref{remark: classical limit} that
\[
H^{n0}_{ij}\to\int_{I_n}\psi_{nj}(t)\psi_{ni}(t)\,dt
	=\frac{k_n}{2}\int_{-1}^1\Psi_j(\tau)\Psi_i(\tau)\,d\tau
	=\frac{k_n\delta_{ij}}{2j-1}.
\]
\end{remark}

\begin{example}\label{example: uniform}
Consider the uniform case $k_n=k$, $r_n=r$ and $\Psi_{nj}=\Psi_j$ 
for $1\le n\le N$ (as in \cref{remark: uniform}), with
$\Psi_j(\tau)=P_{j-1}(\tau)$ as above.  We then have
\[
H^{n\bar\ell}_{ij}=k^\alpha H^{\bar\ell}_{ij}
    \quad\text{for $1\le\ell\le n\le N$ and $i$, $j\in\{1,2,\ldots,r\}$,}
\]
where, by \cref{lem: Hnn},
\begin{equation}\label{eq: H<0>}
\begin{aligned}
H^0_{ij}=\frac{1}{2^\alpha\Gamma(\alpha)}&\biggl(
    \int_{-1}^1(1-\sigma)^\alpha P_{j-1}(\sigma)\,d\sigma\\
&\qquad{}-\int_{-1}^1(1+y)^{\alpha-1}(1-y)\Phi_{ij}(y)\,dy\biggr),
\end{aligned}
\end{equation}
with
\begin{equation}\label{eq: Phi ij l=0}
\Phi_{ij}(y)=\frac12\int_{-1}^1
    P_{j-1}\bigl(\tfrac12(1-y)(1+z)-1\bigr)
    P'_{i-1}\bigl(1-\tfrac12(1-y)(1-z)\bigr)\,dz,
\end{equation}
and by \cref{lem: H A B C},
\[
H^{\bar\ell}_{ij}=\frac{\bar\ell^{\alpha-1}}{2\Gamma(\alpha)}\bigl(
\mathcal{A}^{\bar\ell}_j+(-1)^i\mathcal{B}^{\bar\ell}_j
-\mathcal{C}^{\bar\ell}_{ij}\bigr)\quad\text{for $\ell\ge1$,}
\]
with, letting $\Delta_{\bar\ell}(\tau)=\tau/(2\bar\ell)$,
\begin{align*}
\mathcal{A}^{\bar\ell}_j&=\int_{-1}^1
	\bigl(1+\Delta_{\bar\ell}(1-\sigma)\bigr)^{\alpha-1}
	P_{j-1}(\sigma)\,d\sigma,\\
\mathcal{B}^{\bar\ell}_j&=\int_{-1}^1
	\bigl(1-\Delta_{\bar\ell}(1+\sigma)\bigr)^{\alpha-1}
	P_{j-1}(\sigma)\,d\sigma,\\
\mathcal{C}^{\bar\ell}_{ij}&=\int_{-1}^1 P_{i-1}'(\tau)\int_{-1}^1
	\bigl(1+\Delta_{\bar\ell}(\tau-\sigma)\bigr)^{\alpha-1}
	P_{j-1}(\sigma)\,d\sigma\,d\tau.
\end{align*}
Moreover, \cref{lem: B C n-1} provides alternative expressions 
when~$\bar\ell=1$:
\begin{align*}
\mathcal{A}^1_{ij}&=2^{1-\alpha}\int_{-1}^1
	(3-\sigma)^{\alpha-1}P_{j-1}(\sigma)\,d\sigma,\\
\mathcal{B}^1_{ij}&=2^{1-\alpha}\int_{-1}^1
	(1-\sigma)^{\alpha-1}P_{j-1}(\sigma)\,d\sigma
\end{align*}
and
\begin{align*}
\mathcal{C}^1_{ij}&=2^{1-\alpha}\biggl(
	\int_{-1}^1(1+\tau)^\alpha P_{i-1}'(\tau)
	\int_0^1(1+z)^{\alpha-1}P_{j-1}\bigl(1-z(1+\tau)\bigr)\,dz\,d\tau\\
&{}+\int_{-1}^1(1-\sigma)^\alpha P_{j-1}(\sigma)
	\int_0^1(z+1)^{\alpha-1}P_{i-1}'\bigl(z(1-\sigma)-1\bigr)\,dz\,d\sigma
\biggr).
\end{align*}
Likewise, \cref{lem: Hnl alt n-2} provides an alternative expression 
for~$\bar\ell\ge2$:
\begin{equation}\label{eq: H<l> alt}
H^{\bar\ell}_{ij}=-\frac{1-\alpha}{4\Gamma(\alpha)}\,\bar\ell^{\alpha-2}
	\int_{-1}^1P_{i-1}(\tau)\int_{-1}^1
	\bigl(1+\Delta_{\bar\ell}(\tau-\sigma)\bigr)^{\alpha-2}
	P_{j-1}(\sigma)\,d\sigma\,d\tau.
\end{equation}
Finally, by arguing as in the proof of \cref{lem: Hn0 parity}, we can show that
\begin{equation}\label{eq: H<l> parity}
H^{\bar\ell}_{ji}=(-1)^{i+j}H^{\bar\ell}_{ij}\quad\text{for all $\bar\ell\ge0$.}
\end{equation}
\end{example}
\section{Reconstruction}\label{sec: reconstruction}
Throughout this section, we continue to use the Legendre 
basis~\eqref{eq: Psi P}. Some insight into the \textsc{dG} method can be had by 
considering the trivial case of~\eqref{eq: fpde} when~$A=0$, that is, 
$\partial_tu=f(t)$ for~$0<t\le T$, with $u(0)=u_0$.  The \textsc{dG} 
scheme~\eqref{eq: dG} then reduces to
\begin{equation}\label{eq: dG A=0}
\bigiprod{\jump{U}^n,X^n_+}+\int_{I_n}\iprod{\partial_t U,X}\,dt
    =\int_{I_n}\iprod{\partial_tu,X}\,dt
\end{equation}
for $X\in\mathbb{P}_{r_n-1}(I_n;V_n)$ and $1\le n\le N$, with~$U^0_-=U_0$.
To state our next result, let $\mathcal{P}_n$ denote the orthoprojector 
from~$L_2(\Omega)$ onto~$V_n$, and define
\[
\mathcal{Q}_{n\ell}=\mathcal{P}_n\mathcal{P}_{n-1}\cdots\mathcal{P}_{\ell+1}.
\]

\begin{lemma}
If $A=0$ and $U_0=\mathcal{P}_0u_0$, then for~$1\le n\le N$ the 
\textsc{dG} solution~$U\in\mathcal{X}$ satisfies
\begin{equation}\label{eq: Un-}
U^n_-=\mathcal{P}_nu(t_n)+\sum_{\ell=0}^{n-1}
    \mathcal{Q}_{n\ell}(\mathcal{P}_\ell-I)u(t_\ell)
\end{equation}
and
\begin{equation}\label{eq: U-u orthog}
\int_{I_n}\iprod{U-u,\partial_tX}\,dt=0
\quad\text{for all $X\in\mathbb{P}_n(I_n;V)$.}
\end{equation}
\end{lemma}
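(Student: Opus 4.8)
The plan is to derive both identities from a single consequence of the scheme~\eqref{eq: dG A=0}. Rewriting the right-hand side using $f=\partial_tu$, moving it to the left to form $\partial_t(U-u)$, and integrating that term by parts on $I_n$, the jump term combines with the boundary contribution at $t_{n-1}$ to leave the master identity
\[
\iprod{U^n_--u(t_n),X^n_-}-\iprod{U^{n-1}_--u(t_{n-1}),X^{n-1}_+}
=\int_{I_n}\iprod{U-u,\partial_tX}\,dt,
\]
valid for every $X\in\mathbb{P}_{r_n-1}(I_n;V_n)$. This uses only $\partial_tu=f$ and ordinary integration by parts, so there is no need to assume that $u$ lies in the trial space.

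For \eqref{eq: Un-} I would specialise the master identity to test functions constant in time, $X(t)\equiv\chi\in V_n$, so that $\partial_tX=0$ and $X^n_-=X^{n-1}_+=\chi$. The right-hand side then vanishes and the two boundary terms collapse to $\iprod{U^n_--u(t_n),\chi}=\iprod{U^{n-1}_--u(t_{n-1}),\chi}$ for all $\chi\in V_n$, that is, $\mathcal{P}_n\bigl(U^n_--u(t_n)\bigr)=\mathcal{P}_n\bigl(U^{n-1}_--u(t_{n-1})\bigr)$. Since $U^n_-\in V_n$ gives $\mathcal{P}_nU^n_-=U^n_-$, this rearranges to the one-step recursion $U^n_-=\mathcal{P}_nu(t_n)+\mathcal{P}_n\bigl(U^{n-1}_--u(t_{n-1})\bigr)$, started from $U^0_-=U_0=\mathcal{P}_0u_0$. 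I would then solve the recursion by induction on~$n$: applying $\mathcal{P}_n$ to the closed form at level $n-1$ and using $\mathcal{P}_n\mathcal{Q}_{n-1,\ell}=\mathcal{Q}_{n\ell}$, the term $-\mathcal{P}_nu(t_{n-1})$ joins the leading contribution $\mathcal{P}_n\mathcal{P}_{n-1}u(t_{n-1})$ as $\mathcal{Q}_{n,n-1}(\mathcal{P}_{n-1}-I)u(t_{n-1})$, supplying exactly the missing $\ell=n-1$ summand and yielding \eqref{eq: Un-}.

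For \eqref{eq: U-u orthog} I would return to the master identity and eliminate $u$ from the boundary terms using that $X^n_-,X^{n-1}_+\in V_n$: projecting gives $\iprod{U^n_--u(t_n),X^n_-}=\iprod{U^n_--\mathcal{P}_nu(t_n),X^n_-}$, while the second term becomes $\iprod{\mathcal{P}_n(U^{n-1}_--u(t_{n-1})),X^{n-1}_+}$. The recursion just derived shows that both inner products share the common first argument $d_n:=U^n_--\mathcal{P}_nu(t_n)=\sum_{\ell=0}^{n-1}\mathcal{Q}_{n\ell}(\mathcal{P}_\ell-I)u(t_\ell)$, so the right-hand side reduces to $\iprod{d_n,X^n_--X^{n-1}_+}$. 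In the setting of \eqref{eq: U-u orthog}, where the spatial space is held fixed ($V_n=V$, $\mathcal{P}_n=\mathcal{P}$, $\mathcal{Q}_{n\ell}=\mathcal{P}$), the closed form collapses via $\mathcal{P}(\mathcal{P}-I)=0$ to $U^n_-=\mathcal{P}u(t_n)$, whence $d_n=0$ and the orthogonality follows at once.

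I expect the main obstacle to be the projection bookkeeping in the inductive solution of the recursion---keeping straight how $\mathcal{P}_n$ acts on the nested products $\mathcal{Q}_{n-1,\ell}$ and checking that the telescoping produces precisely the stated sum---rather than either the integration by parts or the final collapse. A secondary point to watch is that the boundary terms genuinely combine into a single multiple of $d_n$ only \emph{after} the projections are inserted, which is what makes the vanishing of the accumulated defect $d_n$ (equivalently, the fixed-space hypothesis) the exact condition under which \eqref{eq: U-u orthog} holds.
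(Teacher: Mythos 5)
Your argument coincides with the paper's for the core of the proof: both integrate by parts in \eqref{eq: dG A=0} to obtain the identity $\iprod{U^n_--u(t_n),X^n_-}-\iprod{U^{n-1}_--u(t_{n-1}),X^{n-1}_+}=\int_{I_n}\iprod{U-u,\partial_tX}\,dt$, both extract the one-step recursion $\mathcal{P}_n\bigl(U^n_--u(t_n)\bigr)=\mathcal{P}_n\bigl(U^{n-1}_--u(t_{n-1})\bigr)$ by testing with functions constant in time, and both solve that recursion by induction using $\mathcal{P}_n\mathcal{Q}_{n-1,\ell}=\mathcal{Q}_{n\ell}$ to arrive at \eqref{eq: Un-}. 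Where you part company is \eqref{eq: U-u orthog}: the paper simply asserts that the constant-test-function identity yields the orthogonality, whereas you push the projections through both boundary terms to show that the right-hand side of the master identity equals $\iprod{d_n,X^n_--X^{n-1}_+}$ with $d_n=U^n_--\mathcal{P}_nu(t_n)$, and then invoke the fixed-space hypothesis to conclude $d_n=0$. Your version is the more careful one: for $r_n\ge2$ the differences $X^n_--X^{n-1}_+$ span the spatial subspace, so the orthogonality genuinely requires $d_n=0$ and does not follow from the recursion alone when the $V_n$ vary arbitrarily; reading the $V$ in \eqref{eq: U-u orthog} as a single fixed space (or assuming the nesting \eqref{eq: Vn nesting} that the paper imposes immediately afterwards), as you do, is exactly what makes the statement, and the paper's one-line justification, correct.
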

\begin{proof}
Integrating by parts in~\eqref{eq: dG A=0}, we find that
\[
\iprod{U^n_--u(t_n),X^n_-}=\iprod{U^{n-1}_--u(t_{n-1}),X^n_+}
    +\int_{I_n}\iprod{U-u,\partial_tX}\,dt.
\]
Given $v\in V_n$, by choosing the constant function~$X(t)=v$ for~$t\in I_n$
we deduce that $\iprod{U^n_--u(t_n),v}=\iprod{U^{n-1}_--u(t_{n-1}),v}$ and
so \eqref{eq: U-u orthog} is satisfied.  Moreover,
\[
\mathcal{P}_n\bigl(U^n_--u(t_n)\bigr) 
    =\mathcal{P}_n\bigl(U^{n-1}_--u(t_{n-1})\bigr),
\]
and, by the choice of initial condition, we see that
\eqref{eq: Un-} is satisfied for~$n=1$:
\begin{align*}
U^1_--\mathcal{P}_1u(t_1)&=\mathcal{P}_1\bigl(U^1_--u(t_1)\bigr)
    =\mathcal{P}_1(I-\mathcal{P}_0+\mathcal{P}_0)\bigl(U^0_--u(t_0)\bigr)\\
    &=\mathcal{P}_1(\mathcal{P}_0-I)u(t_0)+\mathcal{P}_1(U_0-\mathcal{P}_0u_0)
    =\mathcal{Q}_{11}(\mathcal{P}_0-I)u(t_0).
\end{align*}
Letting $n\ge2$, we make the induction hypothesis 
\[
U^{n-1}_-=\mathcal{P}_{n-1}u(t_{n-1})+\sum_{\ell=0}^{n-1}
    \mathcal{Q}_{n-1,\ell}(\mathcal{P}_\ell-I)u(t_\ell),
\]
and observe that
\begin{align*}
U^n_--\mathcal{P}_nu(t_n)&=\mathcal{P}_n\bigl(U^n_--u(t_n)\bigr)
    =\mathcal{P}_n(I-\mathcal{P}_{n-1}+\mathcal{P}_{n-1})
        \bigl(U^{n-1}_--u(t_{n-1})\bigr)\\
    &=\mathcal{P}_n(\mathcal{P}_{n-1}-I)u(t_{n-1})
    +\mathcal{P}_n\sum_{\ell=0}^{n-1}\mathcal{Q}_{n-1,\ell}
        (\mathcal{P}_\ell-I)u(t_\ell),
\end{align*}
which gives the desired formula~\eqref{eq: Un-}.
\end{proof}

For the remainder of this section, we will assume that the subspaces~$V_n$
are nested, as follows:
\begin{equation}\label{eq: Vn nesting}
V_0\supseteq V_1\supseteq V_2\supseteq\cdots\supseteq V_N.
\end{equation}
It follows that $\mathcal{P}_{\ell+1}(\mathcal{P}_\ell-I)=0$ 
for~$0\le\ell\le N-1$ and so 
\begin{equation}\label{eq: Un- simple}
U^n_-=\mathcal{P}_nu(t_n).  
\end{equation}
The following explicit representation for~$U$ holds.

\begin{lemma}\label{lem: U expansion}
If $A=0$, $U_0=\mathcal{P}_0u_0$ and the subspaces 
satisfy~\eqref{eq: Vn nesting}, then
\begin{equation}\label{eq: U expansion}
U(t)=\sum_{j=1}^{r_n-1}a_{nj}\psi_{nj}(t)+\tilde a_n\psi_{nr_n}(t)
    \quad\text{for $t\in I_n$,}
\end{equation}
where
\[
a_{nj}=\frac{2j-1}{k_n}\int_{I_n}\mathcal{P}_nu(t)\psi_{nj}(t)\,dt
\]
are the local Fourier--Legendre coefficients of~$\mathcal{P}_nu$, but
\[
\tilde a_n=\mathcal{P}_nu(t_n)-\sum_{j=1}^{r_n-1}a_{nj}.
\]
\end{lemma}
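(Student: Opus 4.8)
The plan is to extract the Fourier--Legendre coefficients of $U$ on $I_n$ and compare them with those of $\mathcal{P}_nu$, exploiting the orthogonality \eqref{eq: U-u orthog} and the endpoint identity \eqref{eq: Un- simple}. Writing $U|_{I_n}=\sum_{j=1}^{r_n}U^{nj}\psi_{nj}$ as in \eqref{eq: U psi}, the normalisation $\int_{I_n}\psi_{nj}\psi_{ni}\,dt=k_n\delta_{ij}/(2j-1)$ (which follows from $\int_{-1}^1\Psi_{nj}\Psi_{ni}\,d\tau=2\delta_{ij}/(2j-1)$ after the change of variables $t=\mathsf{t}_n(\tau)$) shows that $U^{nj}=\frac{2j-1}{k_n}\int_{I_n}U\psi_{nj}\,dt$. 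Hence it suffices to prove that $\int_{I_n}(U-\mathcal{P}_nu)\psi_{nj}\,dt=0$ for $1\le j\le r_n-1$, and then to determine the top coefficient using the value of $U$ at $t_n$.

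First I would convert the orthogonality into a statement about $\mathcal{P}_nu$. Since $\partial_t$ maps the trial space $\mathbb{P}_{r_n-1}(I_n;V_n)$ onto the $V_n$-valued time-polynomials of degree at most $r_n-2$, and since $u-\mathcal{P}_nu$ is pointwise orthogonal to $V_n$, relation \eqref{eq: U-u orthog} gives $\int_{I_n}\iprod{U-\mathcal{P}_nu,W}\,dt=0$ for every $W\in\mathbb{P}_{r_n-2}(I_n;V_n)$. Choosing $W=\phi\,\psi_{nj}$ with $\phi\in V_n$ and $1\le j\le r_n-1$ (so that $\deg\psi_{nj}=j-1\le r_n-2$), and noting that $\int_{I_n}(U-\mathcal{P}_nu)\psi_{nj}\,dt$ is itself an element of $V_n$, I would conclude that this integral is orthogonal to all of $V_n$ and therefore vanishes. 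Thus $U^{nj}=a_{nj}$ for $1\le j\le r_n-1$.

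It then remains to identify the top coefficient $U^{nr_n}=:\tilde a_n$. Here the key simplification is \eqref{eq: Psi pm 1}: because $\psi_{nj}(t_n)=\Psi_{nj}(1)=1$ for every $j$, evaluating \eqref{eq: U psi} at the right endpoint gives $U^n_-=\sum_{j=1}^{r_n}U^{nj}=\sum_{j=1}^{r_n-1}a_{nj}+\tilde a_n$. Combining this with the endpoint identity \eqref{eq: Un- simple}, namely $U^n_-=\mathcal{P}_nu(t_n)$, yields $\tilde a_n=\mathcal{P}_nu(t_n)-\sum_{j=1}^{r_n-1}a_{nj}$, which completes the representation \eqref{eq: U expansion}.

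I expect the main obstacle to be the bookkeeping that explains why the orthogonality pins down only the lower $r_n-1$ coefficients: differentiating the trial functions in \eqref{eq: U-u orthog} loses one polynomial degree, so the Galerkin conditions control $U$ only up to time-degree $r_n-2$, and it is precisely the endpoint information carried by \eqref{eq: Un- simple} (which in turn relies on the nesting \eqref{eq: Vn nesting}) that supplies the single remaining equation for the highest Legendre mode. Some care is also needed to justify replacing $u$ by $\mathcal{P}_nu$ inside each time integral, which is legitimate only because the relevant test functions take values in $V_n$.
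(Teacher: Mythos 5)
Your proof is correct and follows essentially the same route as the paper's: identify the coefficients of $\psi_{n1},\dots,\psi_{n,r_n-1}$ via the Legendre orthogonality, and pin down the top coefficient by evaluating at $t_n$ using \eqref{eq: Un- simple} together with $\psi_{nj}(t_n)=1$. You are in fact more explicit than the paper, whose proof asserts that the formula for $a_{nj}$ ``follows at once from the orthogonality property'' without spelling out the appeal to \eqref{eq: U-u orthog} (and the pointwise orthogonality of $u-\mathcal{P}_nu$ to $V_n$) that justifies replacing $U$ by $\mathcal{P}_nu$ in the integral for $j\le r_n-1$.
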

\begin{proof}
By definition, $U|_{I_n}\in\mathbb{P}_{r_n-1}(I_n;V_n)$ so there exist
coefficients $a_{nj}$~and $\tilde a_n$ in~$V_n$ such that $U$ has the desired
expansion.  The formula for~$a_{nj}$ follows at once from the orthogonality
property of the~$\psi_{nj}$ (see \cref{remark: H alpha=1}).  The formula
for~$\tilde a_n$ follows from~\eqref{eq: Un- simple} because 
$\psi_{nj}(t_n)=P_{j-1}(1)=1$ for all~$j$.
\end{proof}

We have a Peano kernel~$\mathsf{G}_r$ for the Fourier--Legendre expansion of 
degree~$r$,
\[
f(\tau)=\sum_{j=1}^{r+1}b_j\Psi_j(\tau)
    +\int_{-1}^1\mathsf{G}_r(\tau,\sigma)f^{(r+1)}(\sigma)\,d\sigma
    \quad\text{for $-1\le\tau\le1$,}
\]
assuming $f:[-1,1]\to\mathbb{R}$ is $C^{r+1}$, and also a Peano 
kernel~$\mathsf{M}_j(\tau)$ for the $j$th coefficient:
\[
b_j=\frac{2j-1}{2}\int_{-1}^1f(\tau)\Psi_j(\tau)\,d\tau
=\int_{-1}^1\mathsf{M}_j(\tau)f^{(j-1)}(\tau)\,d\tau.
\]
Thus, if $t=\mathsf{t}_n(\tau)$~and $s=\mathsf{t}_n(\sigma)$, and if we 
define the local Peano kernels
\[
\mathsf{g}_{nr}(t,s)=(k_n/2)^r\mathsf{G}_r(\tau,\sigma)
\quad\text{and}\quad
\mathsf{m}_{nj}(t)=(k_n/2)^{j-2}\mathsf{M}_j(\tau),
\]
then
\begin{equation}\label{eq: Pu expansion}
\mathcal{P}_nu(t)=\sum_{j=1}^{r_n+1}a_{nj}\psi_{nj}(t)
    +\int_{I_n}\mathsf{g}_{r_n}(t,s)\mathcal{P}_nu^{(r_n+1)}(s)\,ds
\quad\text{for $t\in I_n$,}
\end{equation}
and
\[
a_{nj}=\int_{I_n}\mathsf{m}_{nj}(s)\mathcal{P}_nu^{(j-1)}(s)\,ds.
\]
It follows that $a_{nj}=O(k_n^{j-1})$ provided $u$ is~$C^{j-1}$ on~$\bar I_n$.

\begin{theorem}\label{thm: Pu-U}
Assume that $A=0$, $U_0=\mathcal{P}u_0$ and the subspaces
satisfy~\eqref{eq: Vn nesting}.  If $u:\bar I_n\to L^2(\Omega)$ is $C^{r_n+1}$, 
then $a_{n,r_n+1}=O(k_n^{r_n})$ and
\begin{equation}\label{eq: Pu-U}
\mathcal{P}_nu(t)-U(t)=a_{n,r_n+1}\bigl[\psi_{n,r_n+1}(t)-\psi_{n,r_n}(t)\bigr]
    +O(k_n^{r_n+1})\quad\text{for $t\in I_n$.}
\end{equation}
\end{theorem}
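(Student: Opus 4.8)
The plan is to subtract the explicit representation~\eqref{eq: U expansion} of~$U$ from the Peano-kernel expansion~\eqref{eq: Pu expansion} of~$\mathcal{P}_nu$, both arranged to run over the degrees $1\le j\le r_n+1$, so that all but the top two Fourier--Legendre modes cancel while the remainder of~\eqref{eq: Pu expansion} is controlled by scaling.

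First I would dispose of the order estimate for~$a_{n,r_n+1}$, which is immediate from the bound $a_{nj}=O(k_n^{j-1})$ recorded just after~\eqref{eq: Pu expansion}: taking $j=r_n+1$ gives $a_{n,r_n+1}=O(k_n^{r_n})$, and here the hypothesis $u\in C^{r_n+1}$ is more than enough, since only $u\in C^{r_n}$ is needed. Writing $R(t)=\int_{I_n}\mathsf{g}_{r_n}(t,s)\mathcal{P}_nu^{(r_n+1)}(s)\,ds$ for the remainder in~\eqref{eq: Pu expansion}, I would at the same time note that $R(t)=O(k_n^{r_n+1})$ uniformly on~$\bar I_n$: the scaling $\mathsf{g}_{r_n}(t,s)=(k_n/2)^{r_n}\mathsf{G}_{r_n}(\tau,\sigma)$ together with the factor $k_n/2$ picked up from the substitution $s=\mathsf{t}_n(\sigma)$ contributes $k_n^{r_n+1}$, while $\mathsf{G}_{r_n}$ is bounded on $[-1,1]^2$ and $\|\mathcal{P}_nu^{(r_n+1)}(s)\|_{L^2(\Omega)}\le\|u^{(r_n+1)}(s)\|_{L^2(\Omega)}$ stays bounded because $u\in C^{r_n+1}$.

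Next I would carry out the subtraction. Using~\eqref{eq: U expansion} and~\eqref{eq: Pu expansion}, the terms with $1\le j\le r_n-1$ cancel, leaving
\[
\mathcal{P}_nu(t)-U(t)=(a_{n,r_n}-\tilde a_n)\psi_{n,r_n}(t)
    +a_{n,r_n+1}\psi_{n,r_n+1}(t)+R(t).
\]
To identify $a_{n,r_n}-\tilde a_n$, I would evaluate~\eqref{eq: Pu expansion} at the right endpoint $t=t_n$, where $\psi_{nj}(t_n)=P_{j-1}(1)=1$ for every~$j$, to obtain $\mathcal{P}_nu(t_n)=\sum_{j=1}^{r_n+1}a_{nj}+R(t_n)$. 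Substituting this into the formula $\tilde a_n=\mathcal{P}_nu(t_n)-\sum_{j=1}^{r_n-1}a_{nj}$ from \cref{lem: U expansion} gives $\tilde a_n=a_{n,r_n}+a_{n,r_n+1}+R(t_n)$, whence $a_{n,r_n}-\tilde a_n=-a_{n,r_n+1}-R(t_n)$.

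Substituting this back, the $\psi_{n,r_n}$ and $\psi_{n,r_n+1}$ contributions assemble into $a_{n,r_n+1}\bigl[\psi_{n,r_n+1}(t)-\psi_{n,r_n}(t)\bigr]$, and the residual error becomes $R(t)-R(t_n)\psi_{n,r_n}(t)$. Since $|\psi_{n,r_n}(t)|=|P_{r_n-1}(\tau)|\le1$ on~$I_n$ and both $R(t)$ and $R(t_n)$ are $O(k_n^{r_n+1})$, this residual is $O(k_n^{r_n+1})$, which is precisely~\eqref{eq: Pu-U}. The one step demanding genuine care --- the main obstacle --- is the uniform remainder bound $R(t)=O(k_n^{r_n+1})$ flagged above; the rest is algebraic bookkeeping of the top two modes together with a single evaluation at~$t_n$.
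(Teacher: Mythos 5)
Your proposal is correct and follows essentially the same route as the paper: subtract \eqref{eq: U expansion} from~\eqref{eq: Pu expansion}, bound the Peano remainder by $O(k_n^{r_n+1})$ via the scaling of~$\mathsf{g}_{nr}$, and identify $a_{n,r_n}-\tilde a_n=-a_{n,r_n+1}+O(k_n^{r_n+1})$ using $U^n_-=\mathcal{P}_nu(t_n)$ and $\psi_{nj}(t_n)=1$. The paper obtains that last identity by letting $t\to t_n$ in the subtracted expansion, whereas you evaluate \eqref{eq: Pu expansion} at $t_n$ and insert it into the formula for~$\tilde a_n$; these are the same computation.
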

\begin{proof}
Subtracting \eqref{eq: U expansion} from~\eqref{eq: Pu expansion}, we have
\[
\mathcal{P}_nu(t)-U(t)=(a_{n,r_n}-\tilde a_n)\psi_{nr_n}(t)
+a_{n,r_n+1}\psi_{n,r_n+1}(t)+O(k_n^{r_n+1})
\]
for $t\in I_n$.  Since $U^n_-=\mathcal{P}_nu(t_n)$~and
$\psi_{n,r_n}(t_n)=\psi_{n,r_n+1}(t_n)=1$, taking the limit as~$t\to t_n$
yields $a_{n,r_n}-\tilde a_n=-a_{n,r_n+1}+O(k_n^{r_n+1})$.
\end{proof}

\begin{corollary}\label{cor: jump U}
$\mathcal{P}_n\jump{U}^{n-1}=2(-1)^{r_n+1}a_{n,r_n+1}+O(k_n^{r_n+1})$.
\end{corollary}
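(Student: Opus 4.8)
The plan is to reduce the jump to a single endpoint evaluation of the expansion already proved in \cref{thm: Pu-U}. First I would split
\[
\mathcal{P}_n\jump{U}^{n-1}=\mathcal{P}_nU^{n-1}_+-\mathcal{P}_nU^{n-1}_-.
\]
Since $U^{n-1}_+$ is the value at $t_{n-1}$ of the polynomial $U|_{I_n}$, it lies in $V_n$, so $\mathcal{P}_nU^{n-1}_+=U^{n-1}_+$. For the second term I would invoke \eqref{eq: Un- simple} with $n$ replaced by $n-1$ to get $U^{n-1}_-=\mathcal{P}_{n-1}u(t_{n-1})$, and then use the nesting $V_n\subseteq V_{n-1}$ from \eqref{eq: Vn nesting}, which gives $\mathcal{P}_n\mathcal{P}_{n-1}=\mathcal{P}_n$ and hence $\mathcal{P}_nU^{n-1}_-=\mathcal{P}_nu(t_{n-1})$. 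Thus the quantity to be estimated collapses to $U^{n-1}_+-\mathcal{P}_nu(t_{n-1})$.

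Next I would evaluate the expansion \eqref{eq: Pu-U} at the left endpoint of $I_n$, that is, take the limit $t\to t_{n-1}$ from within $I_n$. Because $u$ is assumed $C^{r_n+1}$ on~$\bar I_n$, the function $\mathcal{P}_nu$ is continuous up to $t_{n-1}$ and the remainder $O(k_n^{r_n+1})$ from the Peano-kernel integral in \eqref{eq: Pu expansion} is bounded uniformly in $t\in\bar I_n$, so the limit is legitimate at the endpoint. This yields
\[
\mathcal{P}_nu(t_{n-1})-U^{n-1}_+
=a_{n,r_n+1}\bigl[\psi_{n,r_n+1}(t_{n-1})-\psi_{n,r_n}(t_{n-1})\bigr]
+O(k_n^{r_n+1}).
\]

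It then remains to evaluate the bracketed factor. Since $\psi_{n,r_n+1}$ and $\psi_{n,r_n}$ correspond to $P_{r_n}$ and $P_{r_n-1}$, and $P_j(-1)=(-1)^j$, I obtain
\[
\psi_{n,r_n+1}(t_{n-1})-\psi_{n,r_n}(t_{n-1})
=(-1)^{r_n}-(-1)^{r_n-1}=2(-1)^{r_n}.
\]
Substituting and rearranging gives $U^{n-1}_+-\mathcal{P}_nu(t_{n-1})=2(-1)^{r_n+1}a_{n,r_n+1}+O(k_n^{r_n+1})$, which is precisely the claimed identity. There is no serious obstacle here: the statement is a corollary in the literal sense, obtained from one endpoint evaluation of \cref{thm: Pu-U} together with the projection identities. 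The only points requiring care are the sign bookkeeping in the Legendre values at $\tau=-1$, and the justification that the $O(k_n^{r_n+1})$ term survives the passage to the boundary of~$I_n$.
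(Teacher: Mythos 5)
Your proof is correct and follows essentially the same route as the paper: both arguments take the limit $t\to t_{n-1}^+$ in \eqref{eq: Pu-U}, identify $\mathcal{P}_nu(t_{n-1})-U^{n-1}_+$ with $-\mathcal{P}_n\jump{U}^{n-1}$ via \eqref{eq: Un- simple} and the nesting \eqref{eq: Vn nesting}, and evaluate $P_{r_n}(-1)-P_{r_n-1}(-1)=2(-1)^{r_n}$. The only difference is cosmetic (you start from the jump and work toward the endpoint limit, whereas the paper works in the other direction), and your extra remark justifying the uniformity of the $O(k_n^{r_n+1})$ term up to the boundary is a welcome point of care.
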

\begin{proof}
As $t\to t_{n-1}^+$, the left-hand side of~\eqref{eq: Pu-U} tends to
\begin{align*}
\mathcal{P}_nu(t_{n-1})-U^{n-1}_+&=
\mathcal{P}_n(I-\mathcal{P}_{n-1}+\mathcal{P}_{n-1})U^{n-1}_--U^{n-1}_+
=\mathcal{P}_nU^{n-1}_--U^{n-1}_+\\
&=-\mathcal{P}_n(U^{n-1}_+-U^{n-1}_-)=-\mathcal{P}_n\jump{U}^{n-1}, 
\end{align*}
and on the right-hand side, $\psi_{n,r_n+1}(t)-\psi_{n,r_n}(t)$ tends 
to~$P_{r_n}(-1)-P_{r_n-1}(-1) =(-1)^{r_n}-(-1)^{r_n-1}=2(-1)^{r_n}$.
\end{proof}

\begin{figure}
\caption{The polynomials $P_r(\tau)-P_{r-1}(\tau)$.}
\label{fig: Radau polys}
\begin{center}
\includegraphics[scale=0.75]{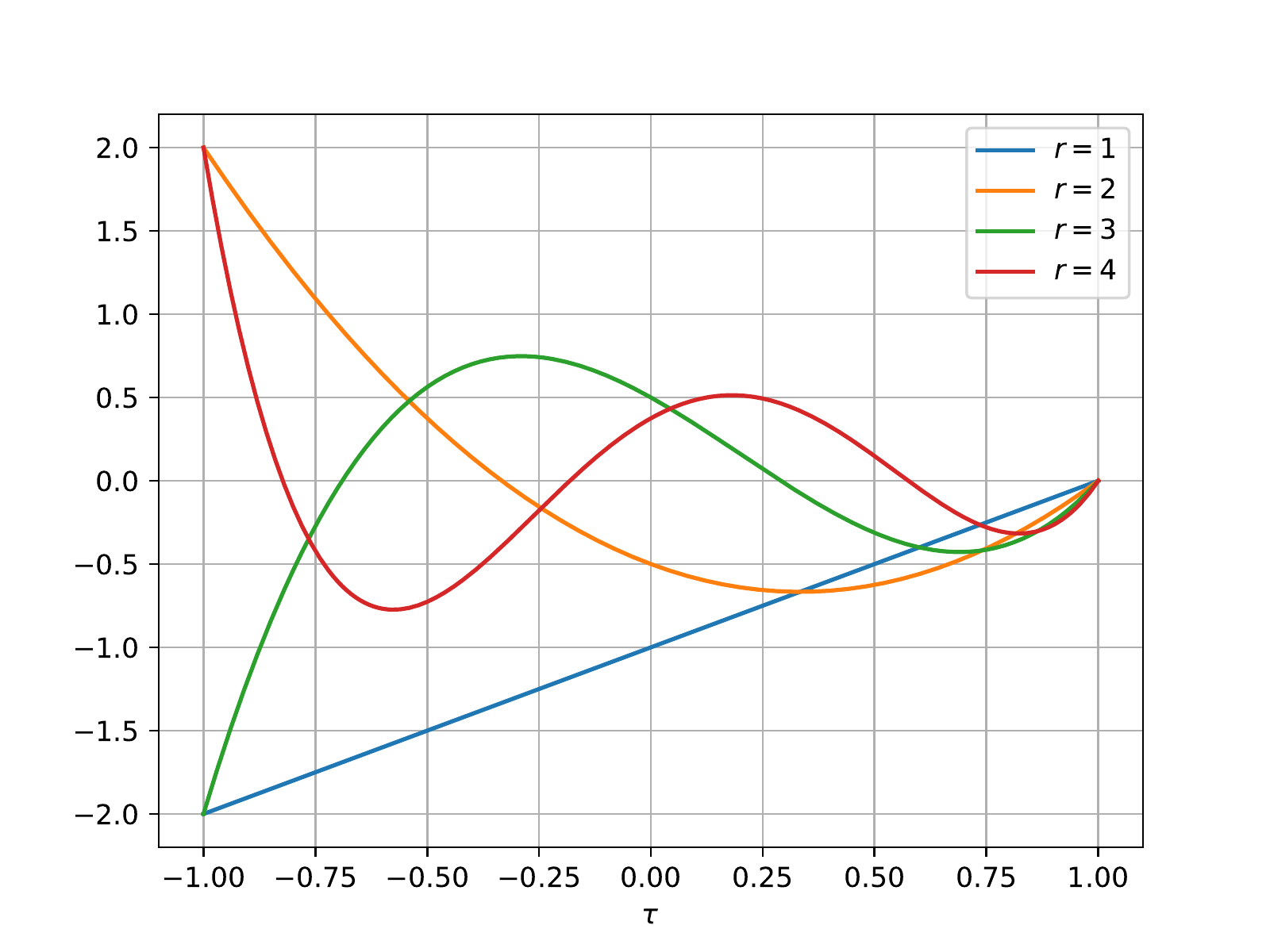}
\end{center}
\end{figure}

In light of \cref{thm: Pu-U}, we consider the polynomials
\[
\psi_{n,r_n+1}(t)-\psi_{n,r_n}(t)=\Psi_{r_n+1}(\tau)-\Psi_{r_n}(\tau)
    =P_{r_n}(\tau)-P_{r_n-1}(\tau).
\]
As illustrated in \cref{fig: Radau polys}, there are $r+1$~points
\[
-1=\tau_{r0}<\tau_{r1}<\cdots<\tau_{rr}=1
\]
such that
\[
(P_r-P_{r-1})(\tau_{rj})=0\quad\text{for $1\le j\le r$.}
\]
In fact, the $r$~zeros $\tau_{r1}$, $\tau_{r2}$, \dots, $\tau_{rr}$
are the points of a right-Radau quadrature
rule~\cite[Chapter~9]{Krylov1962} on the interval~$[-1,1]$.  We put
\begin{equation}\label{eq: t*nj}
t^*_{nj}=\mathsf{t}_n(\tau_{r_nj})\quad\text{for $0\le j\le r_n$,}
\end{equation}
so that $t^*_{n-1}=t^*_{n0}<t_{n1}<\cdots<t^*_{nr_n}=t_n$ and
\[
\psi_{n,r_n+1}(t^*_{nj})-\psi_{n,r_n}(t^*_{nj})=0
\quad\text{for $1\le j\le r_n$.}
\]

From \cref{thm: Pu-U}, we see that $\mathcal{P}_nu(t)-U(t)=O(k_n^{r_n})$
for general~$t\in I_n$, but
$\mathcal{P}_nu(t^*_{nj})-U(t^*_{nj})=O(k_n^{r_n+1})$ for~$1\le j\le r_n$.
Let $\widehat{\mathcal{X}}$ denote the space obtained from~$\mathcal{X}$ by
increasing the maximum allowed polynomial degree over the subinterval~$I_n$
from $r_n$ to~$\hat r_n=r_n+1$, for~$1\le n\le N$. The 
\emph{reconstruction}~$\widehat U\in\widehat{\mathcal{X}}$ of~$U\in\mathcal{X}$ 
is then defined by requiring that
\[
\widehat U(t^*_{nj})=U(t^*_{nj})\quad\text{for $1\le j\le r_n-1$,}
\]
and that the one-sided limits at the end points are
\[
\widehat U^{n-1}_+=\mathcal{P}_nU^{n-1}_-\quad\text{and}\quad 
\widehat U^n_-=U^n_-.
\]
Since $\widehat U|_{I_n}$ is a polynomial of degree at most~$\hat r_n-1=r_n$, 
it is uniquely determined by these $r_n+1$ interpolation conditions.  Notice
also that $\widehat U$ is continuous at~$t_{n-1}$ if~$V_{n-1}=V_n$ because
$\mathcal{P}_nU^{n-1}_-=U^{n-1}_-$.

Makridakis and Nochetto~\cite{MakridakisNochetto2006} introduced the
reconstruction in their analysis of \emph{a posteriori} error bounds
for parabolic PDEs.  Since the polynomial~$(U-\widehat U)|_{I_n}$ has 
degree at most~$r_n$ and vanishes at~$t^*_{nj}$ for~$1\le n\le r_n$, it must
be a multiple of $\psi_{n,r_n+1}-\psi_{nr_n}$.  In fact, by taking
limits as~$t\to t_{n-1}^+$, we see that
\begin{equation}\label{eq: U-hat U}
U(t)-\widehat U(t)=\frac12(-1)^{r_n}\mathcal{P}_n\jump{U}^{n-1}
    \bigl[\psi_{n,r_n+1}(t)-\psi_{n,r_n}(t)\bigr]
\quad\text{for $t\in I_n$.}
\end{equation}
At the same time, by \cref{thm: Pu-U}~and \cref{cor: jump U},
\begin{equation}\label{eq: U-Pu}
U(t)-\mathcal{P}_nu(t)=\frac12(-1)^{r_n}\mathcal{P}_n\jump{U}^{n-1}
    \bigl[\psi_{n,r_n+1}(t)-\psi_{n,r_n}(t)\bigr]+O(k_n^{r_n+1})
\quad\text{for $t\in I_n$,}
\end{equation}
implying that $\widehat U-\mathcal{P}_nu$ is $O(k_n^{r_n+1})$ on~$I_n$.
One of our principal aims in the next section is to investigate 
numerically the error in the \textsc{dG} solution~$U$ and its
reconstruction~$\widehat U$ in non-trival cases where~$A\ne0$.  We can hope
that something like \eqref{eq: U-Pu} still holds, because the time
derivative in the term~$\partial_t^{1-\alpha}Au$ is of lower order
than in~$\partial_tu$.  Notice that \eqref{eq: U psi}~and
\eqref{eq: U-hat U} imply
\[
\widehat U(t)=\sum_{j=1}^{\hat r_n}\widehat U^{nj}\psi_{nj}(t)
    \quad\text{for $t\in I_n$,}
\]
where
\[
\widehat U^{nj}=\begin{cases}
U^{nj},&1\le j\le r_n-1,\\
U^{nr_n}+\tfrac12(-1)^{r_n}\mathcal{P}_n\jump{U}^{n-1},&j=r_n,\\
\tfrac12(-1)^{r_n+1}\mathcal{P}_n\jump{U}^{n-1},&j=r_n+1=\hat r_n.
\end{cases}
\]
\section{Numerical experiments}\label{sec: numerical}

A Julia package~\cite{McLean2020} provides functions to evaluate
the coefficients $G^n_{ij}$, $K^{n,n-1}_{ij}$ and $H^{n\bar\ell}_{ij}$
based on the results of \cref{sec: evaluation,sec: Legendre}.  This package
also includes (in the \texttt{examples} directory) the scripts used for
the examples below.

\subsection{The matrix $\protect{\boldsymbol{H}^{\bar\ell}}$}
Let $\alpha=3/4$, and consider for simplicity the case when $k_n=k$ and $r_n=r$
are constant for all~$n$, so that the formulae of \cref{example: uniform}
apply.  To get a sense of how the matrix entries~$H^{\bar\ell}_{ij}$ behave, 
we computed
\[
\boldsymbol{H}^{0}=\left[\begin{array}{rrrr}
 1.08807 &  0.15544 &  0.07065 &  0.04239 \\
-0.15544 &  0.49458 &  0.09326 &  0.04834 \\
 0.07065 & -0.09326 &  0.33839 &  0.06893 \\
-0.04239 &  0.04834 & -0.06893 &  0.26319 
\end{array}\right]
\]
and 
\[
\boldsymbol{H}^{1}=\left[\begin{array}{rrrr}
-0.34623 & -0.13428 & -0.06884 & -0.04219\\
 0.13428 &  0.08414 &  0.05405 &  0.03690\\
-0.06884 & -0.05405 & -0.04050 & -0.03048\\
 0.04219 &  0.03690 &  0.03048 &  0.02472
\end{array}\right],
\]
which illustrate the property~\eqref{eq: H<l> parity}.  
The factor~$\bigl(1+\Delta_{\bar\ell}(\tau-\sigma)\bigr)^{\alpha-2}$ 
in~\eqref{eq: H<l> alt} becomes very smooth as~$\bar\ell$ increases,
with the result that $H^{\bar\ell}_{ij}$ decays rapidly to zero as~$i+j$ 
increases.  Even for~$\bar\ell=2$, we have
\[
\boldsymbol{H}^2=10^{-1}\times\left[\begin{array}{rrrr}
-0.91483 & -0.10220 & -0.01261 & -0.00164\\
 0.10220 &  0.02027 &  0.00355 &  0.00059\\
-0.01261 & -0.00355 & -0.00080 & -0.00016\\
 0.00164 &  0.00059 &  0.00016 &  0.00004
\end{array}\right],
\]
and \cref{fig: antidiags} shows this behaviour for larger values of~$\bar\ell$,
with entries in the lower right corner of the matrix reaching the order of the 
machine epsilon ($2^{-52}\approx2.22\times10^{-16}$) once $\bar\ell$ is of 
order~$100$.

\begin{figure}
\caption{Decay of $\max_{i+j=m}|H^{\bar\ell}_{ij}|$ for increasing $m$~and 
$\bar\ell$, when $\alpha=3/4$.}\label{fig: antidiags}
\begin{center}
\includegraphics[scale=0.75]{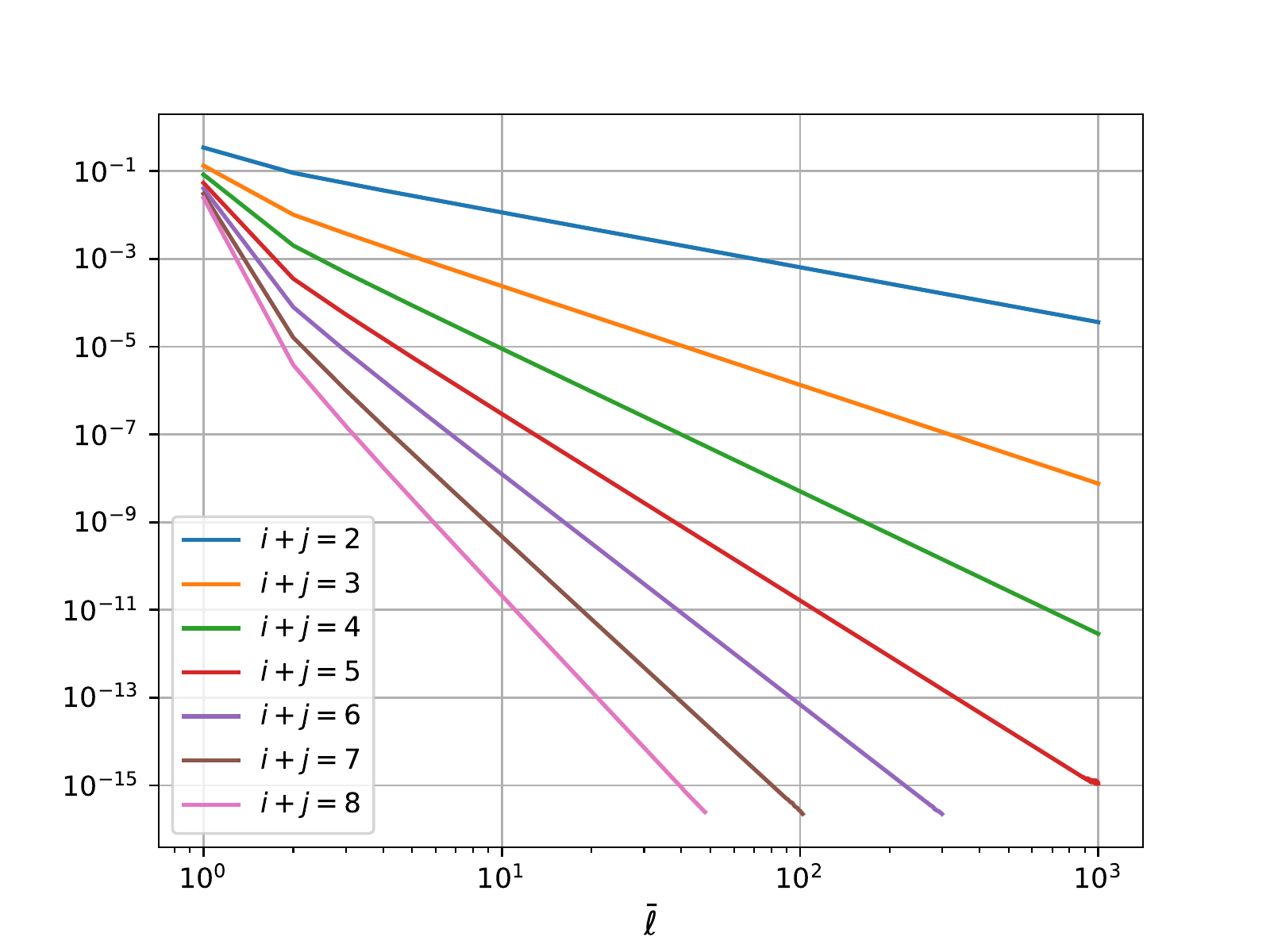}
\end{center}
\end{figure}

The value of~$H^0_{ij}$ can be computed to machine precision using Gauss 
quadrature with $M_\sigma=\lceil j/2\rceil$ and $M_y=\lceil (i+j)/2\rceil-1$ 
points for the integrals with respect to $\sigma$~and $y$ in~\eqref{eq: H<0>}, 
and using $M_z=\lceil(i+j)/2\rceil-1$ points for the integral with respect
to~$z$ in~\eqref{eq: Phi ij l=0}.  When~$\ell\ge1$, let $H^{\bar\ell}_{ij}(M)$ 
denote the value of~$H^{\bar\ell}_{ij}$ computed by applying $M$-point Gauss 
rules to \eqref{eq: H<l> alt}, that is, $M^2$~points for the double integral. 
For a given absolute tolerance~$\mathtt{atol}$, let 
$M^{\bar\ell}_r(\mathtt{atol})$ denote the smallest~$M$ for which 
\[
\bigl|H^{\bar\ell}_{ij}(M)-H^{\bar\ell}_{ij}(12)\bigr|<\mathtt{atol}
\quad\text{for all $i$, $j\in\{1,2,\ldots,r\}$.}
\]
\cref{table: Gauss points} lists some values 
of~$M^{\bar\ell}_r(\mathtt{atol})$ for~$\mathtt{atol}=10^{-14}$.  
Unsurprisingly, fewer quadrature points are needed as~$\bar\ell$ increases.

\begin{table}
\caption{Numbers of Gauss points~$M^{\bar\ell}_r(\mathtt{atol})$ required for 
$\texttt{atol}=10^{-14}$, when $\alpha=3/4$.}
\label{table: Gauss points}
\begin{center}
\begin{tabular}{r|rrrrr}
$r$&$\bar\ell=1$&$\bar\ell=2$&$\bar\ell=10$&$\bar\ell=100$&$\bar\ell=1000$\\
\hline
  1&      9&      9&      5&      3&      2\\
  2&      9&      9&      5&      3&      2\\
  3&      9&      9&      5&      4&      3\\
  4&     10&     10&      6&      4&      3\\
  5&     10&     10&      6&      5&      4\\
  6&     11&     11&      7&      5&      4
\end{tabular}
\end{center}
\end{table}

\begin{figure}
\caption{The exact solution~$u$ of \eqref{eq: scalar ivp} in the 
case~\eqref{eq: scalar ivp params}, together with the piecewise-quadratic
($r=3$) \textsc{dG} solution with $N=3$ subintervals.}
\label{fig: scalar ivp soln}
\begin{center}
\includegraphics[scale=0.75]{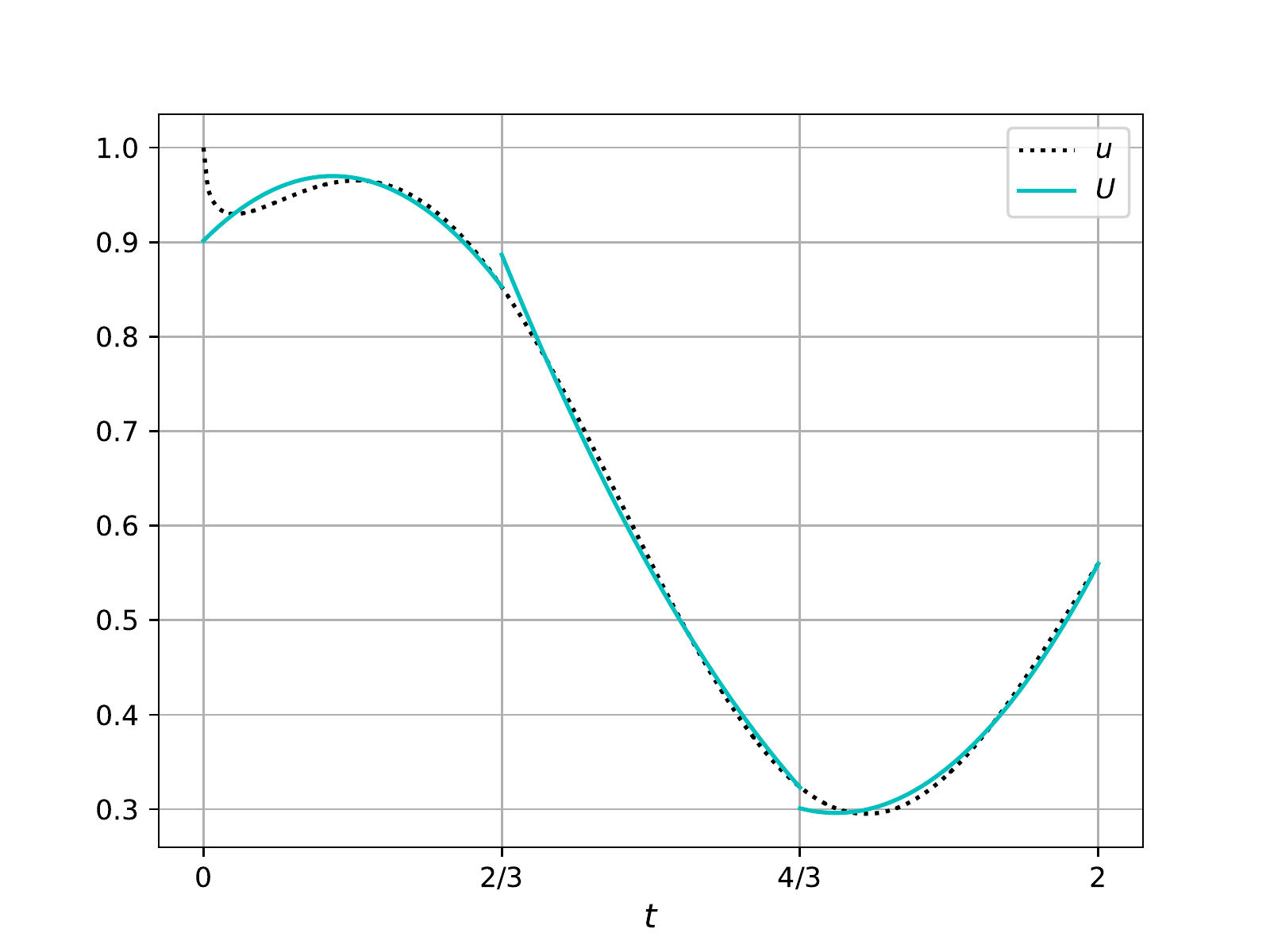}
\end{center}
\end{figure}

\subsection{A fractional \textsc{ODE}}
We consider the initial-value problem~\eqref{eq: scalar ivp} in the case
\begin{equation}\label{eq: scalar ivp params}
\alpha=1/2,\quad\lambda=1/2,\quad f(t)=\cos\pi t,\quad u_0=1,\quad T=2,
\end{equation}
for which the solution is
\[
u(t)=u_0E_{1/2}(-\lambda\sqrt{t})+\int_0^tE_{1/2}(-\lambda\sqrt{t-s})f(s)\,ds,
\]
where $E_\alpha(x)=\sum_{n=0}^\infty t^n/\Gamma(1+n\alpha)$ denotes the
Mittag--Leffler function.  The substitution $s=(1-y^2)t$ yields a smooth 
integrand, allowing $u(t)$ to be computed accurately via Gauss quadrature
on the unit interval~$[0,1]$. Note that $E_{1/2}(-x)=
\operatorname{erfcx}(x)=e^{x^2}\operatorname{erfc(x)}$ is just the scaled 
complementary error function.

\cref{fig: scalar ivp soln} shows $u$, together with the \textsc{dG} 
solution~$U$ using piecewise quadratics ($r=3$) and only $N=3$ subintervals.
In \cref{fig: scalar ivp Enj} we plot the absolute errors,
\begin{equation}\label{eq: Enj}
\widehat E(t)=|\widehat U(t)-u(t)|
\quad\text{and}\quad
E^n_j=\begin{cases}
	|U(t^*_{n0}+0)-u(t^*_{n0})|,&j=0,\\
	|U(t^*_{nj})-u(t^*_{nj})|,&1\le j\le r-1,\\
	|U(t^*_{nr}-0)-u(t^*_{nr})|,&j=r,
\end{cases}
\end{equation}
again using piecewise quadratics but now with $N=5$~subintervals of uniform 
size~$k_n=k=T/N$.  Two features are immediately apparent.  First, the 
accuracy is poor near~$t=0$, reflecting the singular behaviour of the 
solution: for $m\ge1$, the $m$th derivative~$u^{(m)}(t)$ blows up 
like $t^{-(m-1/2)}$ as~$t\to0$.  Second, on intervals~$I_n$ away from~$0$,
the error is notably smaller at the right-Radau points 
($t^*_{nj}$ for $1\le j\le3$) than at the left endpoint ($t^*_{n0}=t_{n-1}$).

\begin{figure}
\caption{Absolute errors in the reconstruction~$\widehat U(t)$ 
for~$0\le t\le T=2$, and in the \textsc{dG} solution~$U(t)$ for~$t=t^*_{nj}$, 
using piecewise quadratics ($r=3$) and $N=5$ uniform subintervals; 
see~\eqref{eq: t*nj}.}
\label{fig: scalar ivp Enj}
\begin{center}
\includegraphics[scale=0.75]{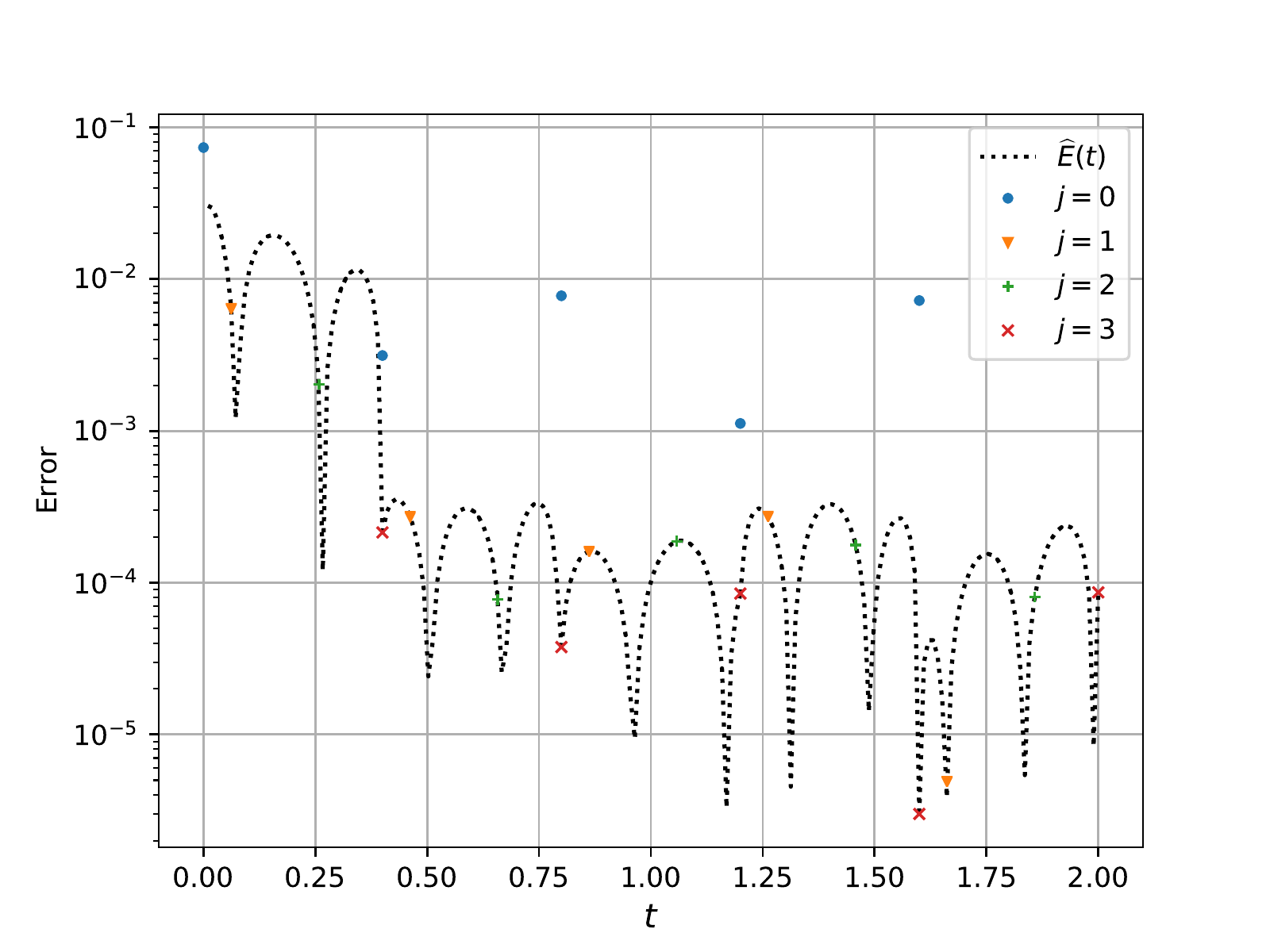}
\end{center}
\end{figure}

\begin{table}
\caption{Maximum weighted errors~\eqref{eq: Ej} at the points~$t^*_{nj}$
using piecewise quadratics ($r=3$) on a uniform grid.}\label{table: Ej r=3}
\renewcommand{\arraystretch}{1.2}
\begin{center}
\texttt{
\begin{tabular}{r|rr|rr|rr|rr|}
\multicolumn{1}{c|}{$N$}&\multicolumn{2}{c|}{$E^{\max}_0$}&
\multicolumn{2}{c|}{$E^{\max}_1$}&\multicolumn{2}{c|}{$E^{\max}_2$}&
\multicolumn{2}{c|}{$E^{\max}_3$}\\
\hline
    8 & 8.0e-03&      & 8.8e-05&      & 1.3e-04&      & 1.0e-04&      \\
   16 & 1.2e-03& 2.69 & 1.4e-05& 2.62 & 1.4e-05& 3.15 & 9.3e-06& 3.46 \\
   32 & 1.7e-04& 2.87 & 1.5e-06& 3.25 & 1.3e-06& 3.40 & 8.2e-07& 3.50 \\
   64 & 2.2e-05& 2.94 & 1.4e-07& 3.42 & 1.2e-07& 3.47 & 7.2e-08& 3.51 \\
  128 & 2.8e-06& 2.97 & 1.3e-08& 3.47 & 1.1e-08& 3.49 & 6.3e-09& 3.51 \\
  256 & 3.6e-07& 2.98 & 1.1e-09& 3.49 & 9.5e-10& 3.50 & 5.5e-10& 3.51 
\end{tabular}
}
\end{center}
\end{table}

In \cref{table: Ej r=3}, we show how the quantities
\begin{equation}\label{eq: Ej}
E^{\max}_j=\max_{1\le n\le N}(t^*_{nj})^{r-\alpha}E^n_j
\end{equation}
behave as~$N$ grows.  These results, together with similar computations
using other choices of $\alpha$~and $r\ge2$, lead us to conjecture that,
in general, using a constant time step~$k$,
\[
E^n_0\le C(t^*_{n0})^{\alpha-r}k^r\quad\text{for $2\le n\le N$,}
\]
whereas
\[
E^n_j\le C(t^*_{nj})^{\alpha-r}k^{r+\alpha}\quad
\text{for $1\le n\le N$ and $1\le j\le r$,}
\]
and that, consequently, 
\[
|\widehat U(t)-u(t)|\le Ct^{\alpha-r}k^{r+\alpha}\quad
\text{for~$t_1\le t\le T$.}
\]
However, using piecewise-constants ($r=1$) we do not observe any
superconvergence, with both $E^{\max}_0$ and $E^{\max}_1$ behaving like
$Ct_n^{1-\alpha}k$, albeit with a noticably smaller constant in the case
of~$E^{\max}_1$.

To suppress the growth in the error as~$t$ approaches $0$, we can use 
a graded mesh of the form
\begin{equation}\label{eq: tn graded}
t_n=(n/N)^qT\quad\text{for $0\le n\le N$,}
\end{equation}
with a suitable grading exponent~$q\ge1$.  \cref{table: recon error q}
shows the maximum error in the reconstruction, i.e.,
$\max_{0\le t\le T}|\widehat U(t)-u(t)|$, together with the associated 
convergence rates, for four choices of~$q$ and using $T=1$ as the final
time.  These errors appear to be of order~$k^{\min(3.5,q\alpha)}$
where $k=\max_{1\le n\le N}k_n\le CN^{-1}$.  We conjecture that, in general,
\begin{equation}\label{eq: Uhat graded error}
|\widehat U(t)-u(t)|\le Ck^{\min(r+\alpha,q\alpha)}
\quad\text{for $0\le t\le T$, provided $r\ge2$.}
\end{equation}

\begin{table}
\caption{Maximum error in the reconstruction~$\widehat U(t)$ 
for~$0\le t\le T=1$, using piecewise quadratics ($r=3$) for four choices of the 
mesh grading exponent~$q$; 
see~\eqref{eq: tn graded}.}\label{table: recon error q}
\renewcommand{\arraystretch}{1.2}
\begin{center}
\texttt{
\begin{tabular}{r|rr|rr|rr|rr|}
\multicolumn{1}{c|}{$N$}&\multicolumn{2}{c|}{$q=1$}&
\multicolumn{2}{c|}{$q=3$}&\multicolumn{2}{c|}{$q=5$}&
\multicolumn{2}{c|}{$q=6$}\\
\hline
   8 & 1.1e-02&      & 1.4e-03&      & 4.1e-04&      & 7.1e-04&      \\
   16 & 6.0e-03& 0.84 & 3.8e-04& 1.89 & 5.2e-05& 2.95 & 9.1e-05& 2.97 \\
   32 & 4.3e-03& 0.50 & 1.3e-04& 1.50 & 7.9e-06& 2.72 & 1.0e-05& 3.19 \\
   64 & 3.0e-03& 0.50 & 4.7e-05& 1.50 & 1.4e-06& 2.51 & 9.8e-07& 3.35 \\
  128 & 2.1e-03& 0.50 & 1.7e-05& 1.50 & 2.5e-07& 2.50 & 9.2e-08& 3.41 \\
  256 & 1.5e-03& 0.50 & 5.9e-06& 1.50 & 4.3e-08& 2.50 & 8.4e-09& 3.45
\end{tabular}
}
\end{center}
\end{table}
\subsection{A fractional PDE}
Consider the elliptic operator~$A=-\partial^2/\partial x^2$ for
the 1D spatial domain~$\Omega=(0,L)$.  To construct a reference solution, 
we exploit fact that the Laplace transform of~$u$,
\[
\tilde u(x,z)=\int_0^\infty e^{-zt}u(x,t)\,dt,
\]
satisfies the two-point boundary-value problem
\[
\omega^2\tilde u-\tilde u_{xx}=g(x,z)
\quad\text{for $0<x<L$,}
\quad\text{with $\tilde u(0,z)=0=\tilde u(L,z)$,}
\]
where
\[
\omega=z^{\alpha/2}\quad\text{and}\quad
g(x,z)=z^{\alpha-1}[u_0(x)+\tilde f(x,z)].
\]
The variation-of-parameters formula leads to the integral 
representation
\begin{multline*}
\tilde u(x,z)=\frac{\sinh\omega(L-x)}{\omega\sinh\omega L}
    \int_0^xg(x,z)\sinh\omega\xi\,d\xi\\
    +\frac{\sinh\omega x}{\omega\sinh\omega L}
    \int_x^Lg(x,z)\sinh\omega\xi\,d\xi,
\end{multline*}
and the Laplace inversion formula then gives
\begin{equation}\label{eq: inv LT}
u(x,t)=\frac{1}{2\pi i}\int_\Gamma e^{zt}\tilde u(x,z)\,dz,
\end{equation}
for a contour~$\Gamma$ homotopic to the imaginary axis 
and passing to the right of all singularities of the 
integrand.

\begin{figure}
\caption{The reference solution for the 1D problem with data given by
\eqref{eq: u0 f}~and \eqref{eq: constants}.}\label{fig: refsoln}
\begin{center}
\includegraphics[scale=0.8]{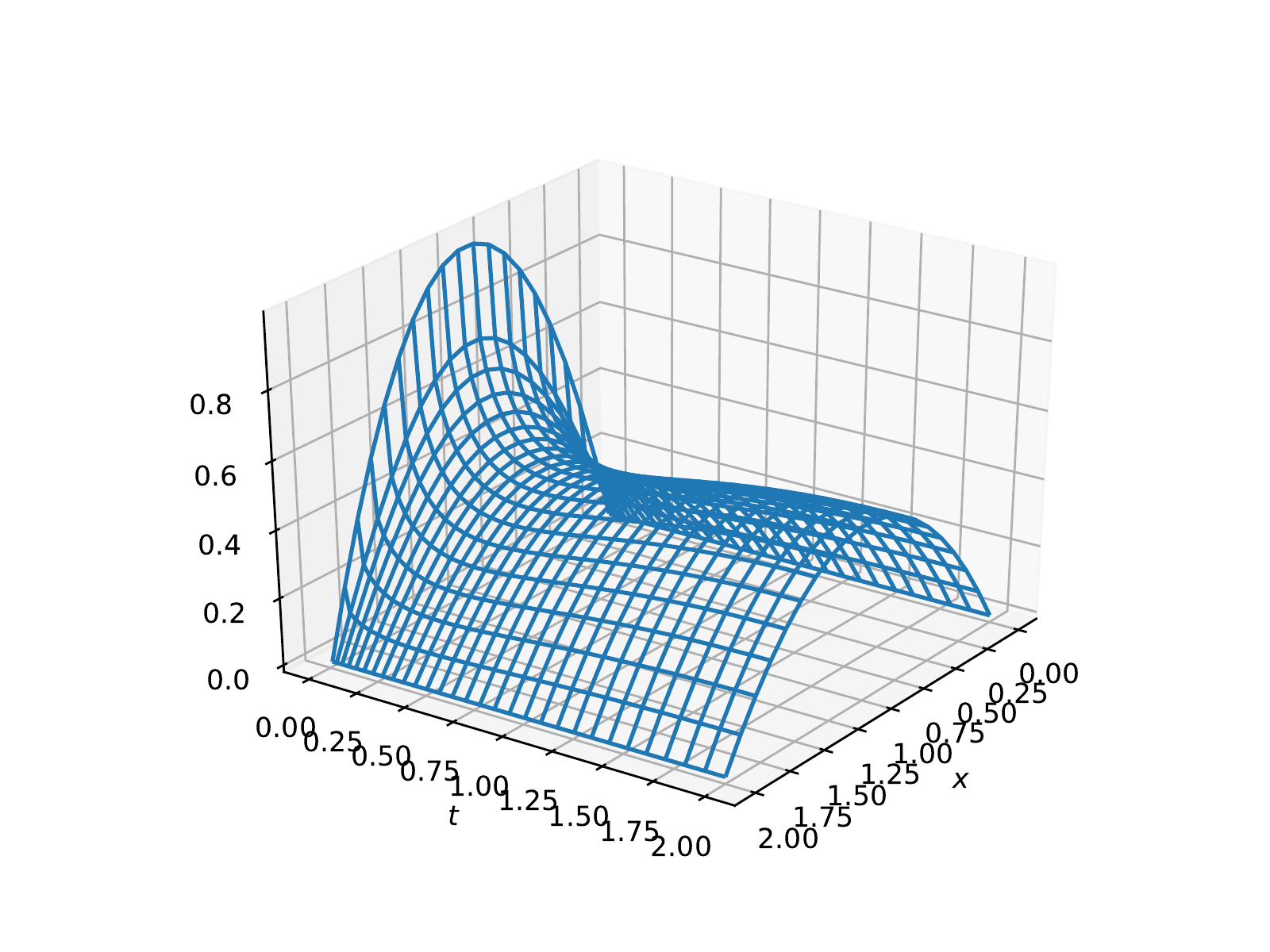}
\end{center}
\end{figure}

\begin{figure}
\caption{Comparison of the jumps~$\|\jump{U}^{n-1}\|$ with the \textsc{dG}
error $\|U(t)-u(t)\|$ and the reconstruction error $\|\widehat U(t)-u(t)\|$.
Top: a uniform mesh with~$N=12$ time steps. Bottom: a graded mesh 
with~$N=40$ time steps.}
\label{fig: jumps FPDE}
\begin{center}
\includegraphics[scale=0.8]{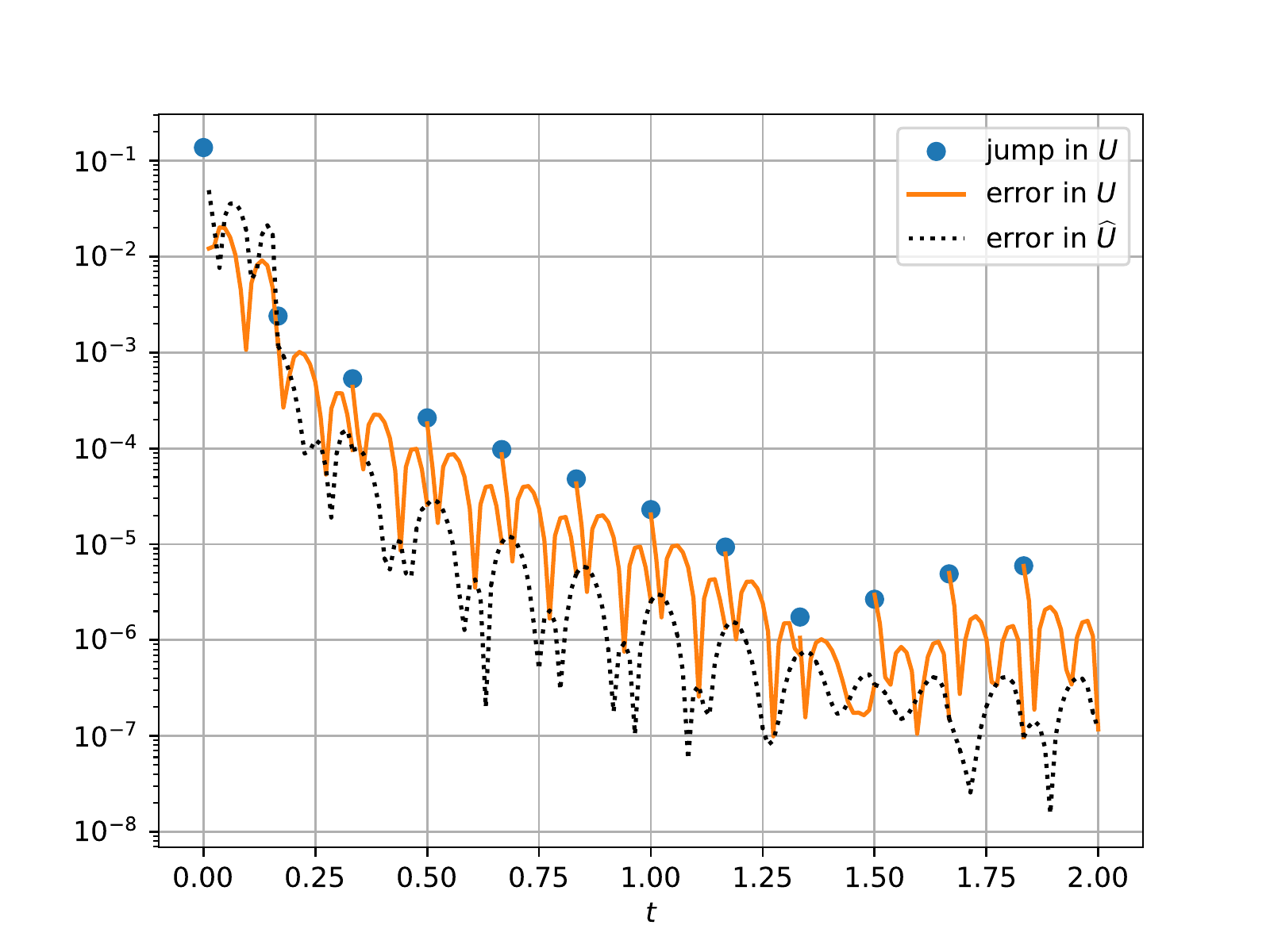}
\includegraphics[scale=0.8]{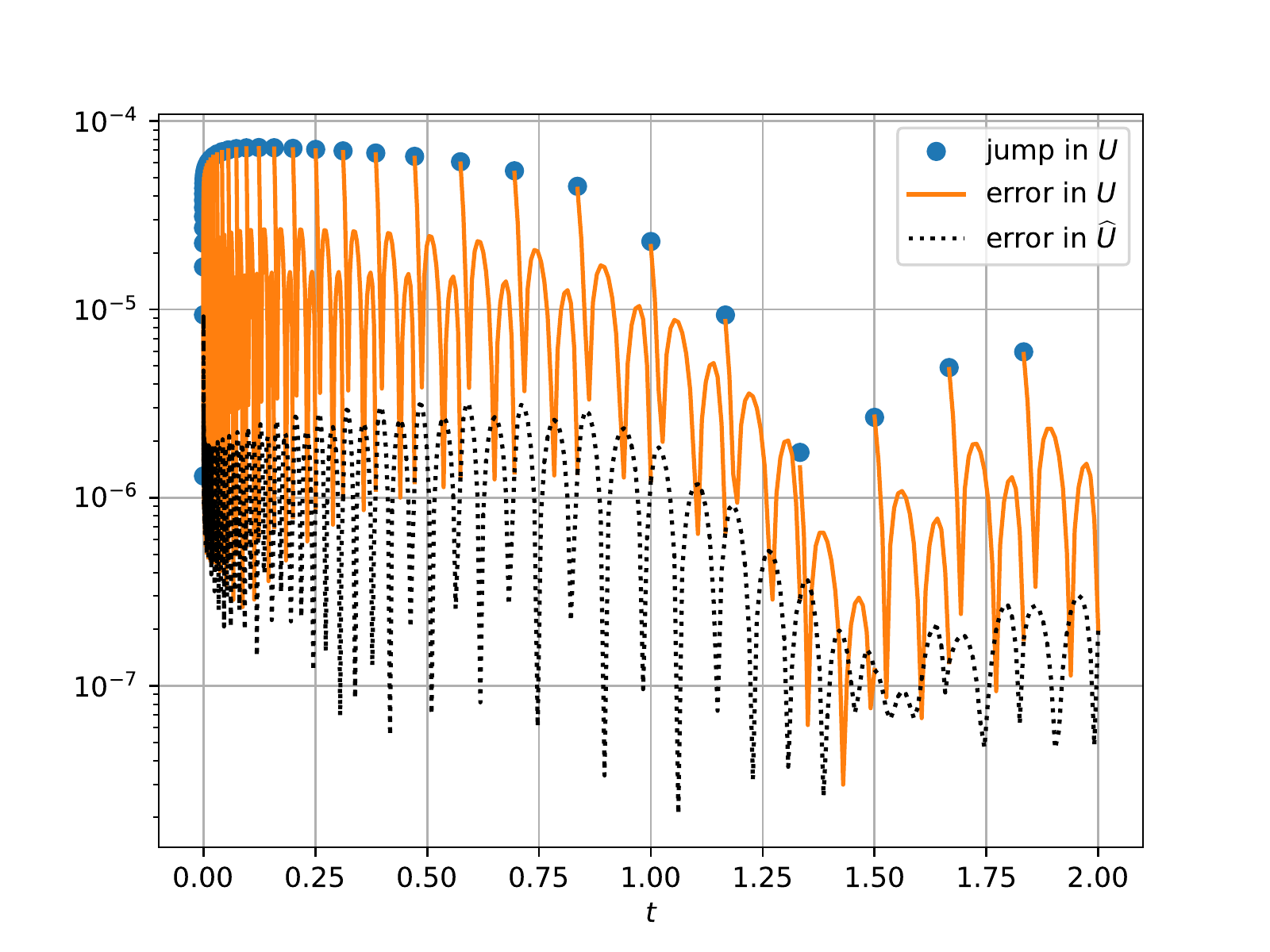}
\end{center}
\end{figure}

We choose as data the functions
\begin{equation}\label{eq: u0 f}
u_0(x)=C_0x(L-x)\quad\text{and}\quad
f(x,t)=C_fte^{-t},
\end{equation}
for constants $C_0$~and $C_f$, and find that
\begin{align*}
\tilde u(x,z)&=\frac{C_0}{z}\,
\frac{\rho_1(x)\sinh\omega(L-x)+\rho_1(L-x)\sinh\omega x}%
{\sinh\omega L}\\
    &\qquad{}+\frac{C_f}{z(z+1)^2}
\frac{\rho_2(x)\sinh\omega(L-x)+\rho_2(L-x)\sinh\omega x}%
{\sinh\omega L},
\end{align*}
where
\[
\rho_1(x)=\bigl(\omega x(L-x)-2\omega^{-1}\bigr)\cosh\omega x
    +(2x-L)\sinh\omega x+2\omega^{-1}
\]
and
\[
\rho_2(x)=\cosh\omega x-1.
\]
To evaluate the contour integral~\eqref{eq: inv LT} we 
apply an optimised equal-weight quadrature rule that arises after deforming
$\Gamma$ into the left branch of an 
hyperbola~\cite{WeidemanTrefethen2007}.  \cref{fig: refsoln} shows
the reference solution over the time interval~$[0,2]$ in the case
\begin{equation}\label{eq: constants}
\alpha=0.6,\qquad L=2,\qquad C_0=1,\qquad C_f=2.
\end{equation}

In~\cref{fig: jumps FPDE}, we plot the $L_2$-norms of the jumps, 
$\|\jump{U}^{n-1}\|$, together with the errors in~$U(t)$ and its 
reconstruction~$\widehat U(t)$.  The \textsc{dG} method used 
piecewise-quadratics ($r=3$), first with a uniform mesh of $N=12$ 
subintervals (top), and then with a non-uniform mesh of $N=40$ subintervals 
(bottom).  In both cases, the spatial discretisation used (continuous) 
piecewise cubics on a uniform grid with $20$~subintervals.  Since $u_0$
is a quadratic polynomial in this instance, we simply put $U_0=u_0$.  
Consistent with our conjecture~\eqref{eq: U-Pu}, we observe that
\[
\sup_{t_{n-1}<t<t_n}\|U(t)-u(t)\|\approx\bigl\|\jump{U}^{n-1}\bigr\|.
\]
Motivated by our conjecture~\eqref{eq: Uhat graded error}, the second mesh
was graded for~$0\le t_n\le 1$ by taking $q=(r+\alpha)/\alpha$, $N=34$~and 
$T=1$ in the formula~\eqref{eq: tn graded}, followed by a uniform mesh on
the other half~$[1,2]$ of the time interval.  We see that the mesh grading
is effective at resolving the solution for~$t$ near zero, albeit with a
substantial increase in the overal computational cost.
\paragraph{Acknowledgements}
This project was supported by a UNSW Faculty Research Grant
(PS47152/IR001/MATH).
\printbibliography
\end{document}